\newcommand{\ms}[2]{\mathscr{#1}\mathit{#2}}
\renewcommand{\sc}[1]{\mathscr{#1}}
\newcommand{\mbb}[1]{\mathbb{#1}}
\renewcommand{\u}[1]{\underline{#1}}
\renewcommand{\t}[1]{\tilde{#1}}
\numberwithin{equation}{section}
\newtheorem{theorem}{Theorem}
\newtheorem{proposition}[theorem]{Proposition}
\newtheorem{lemma}[theorem]{Lemma}
\newtheorem{corollary}[theorem]{Corollary}
\newtheorem*{theorem*}{Theorem}
\newtheorem*{corollary*}{Corollary}
\newtheorem*{proposition*}{Proposition}
\theoremstyle{definition}
\newtheorem{remark}[theorem]{Remark}
\newtheorem*{question*}{Question}
\begin{document}
\title{Smash nilpotent cycles on products of curves}

\author[R. Sebastian]{Ronnie Sebastian}

\address{Humboldt Universit\"{a}t zu Berlin, 
Institut f\"{u}r Mathematik, 
Rudower Chaussee 25, 
10099 Berlin}

\email{ronnie.sebastian@gmail.com}

\begin{abstract}
Voevodsky has conjectured that numerical and smash equivalence coincide on a smooth projective variety. We prove the conjecture 
for one dimensional cycles on an arbitrary product of curves. As a consequence we get that numerically trivial 1-cycles on 
an abelian variety are smash nilpotent.
\end{abstract}

\maketitle

\section{Introduction}
Throughout this article we work over an algebraically closed field. Let $X$ be a smooth and projective variety. Define  
rational cycles of codimension $n$ to be elements of the group $Z^n(X):=\bigoplus_{Y\subset X}\mathbb{Q}\cdot [Y]$, 
where $Y$ varies over the set of irreducible closed subvarieties of $X$ of codimension $n$. Various equivalence relations may be defined on the group of cycles. 
Examples of such equivalence relations are rational equivalence, algebraic equivalence, homological 
equivalence and numerical equivalence.

Homological equivalence is defined using a Weil cohomology theory. An important consequence of the two standard conjectures (Lefschetz standard 
conjecture and Hodge standard conjecture) is that numerical and homological equivalence coincide, which would mean that being homologically 
trivial is independent of the choice of the Weil cohomology theory. For an exposition on the standard conjectures we refer the reader to 
\cite{kleiman}. In \cite{kleiman}, Kleiman writes, ``Sometimes, the coincidence of the two relations may be proved by sandwiching homological 
equivalence between numerical equivalence and another equivalence relation.''

Voevodsky \cite{voe} introduced the adequate relation of smash nilpotence. Let $X$ be a smooth projective variety over a field $k$. An algebraic
cycle $\alpha$ on $X$ (with rational coefficients) is \emph{smash-nilpotent} if there exists $n>0$
such that $\alpha^n$ is rationally equivalent to $0$ on $X^n$. 
Voevodsky \cite[Cor. 3.3]{voe} and Voisin \cite[Lemma 2.3]{voisin} proved that any cycle algebraically equivalent to $0$ is
smash-nilpotent. Because of the multiplicative property of the cycle class map, any smash-nilpotent 
cycle is homologically equivalent to 0 and so numerically equivalent to $0$; Voevodsky conjectured that the converse is true \cite[Conj.
4.2]{voe}. This would imply that homological and numerical equivalence coincide. This also implies Bloch's conjecture on zero cycles on surfaces 
with $p_g=0$. Moreover, this is an interesting question in itself.

The first general result which gave examples of smash nilpotent cycles is the following result of Kimura, \cite[Proposition 6.1]{kimura}. 
If $M$ and $N$ be finite dimensional motives with different parity and $f:M\to N$ a morphism of motives, then $f$ is smash nilpotent.

In \cite{shermenev}, Shermenev proves that, for an abelian variety $A$, the motive $h^1(A)$ is oddly finite dimensional. 
In \cite{ks}, the authors combine this with results in \cite{kimura}, to show that skew cycles 
($\beta$ is called \emph{skew} if $[-1]^*\beta=-\beta$) are smash nilpotent. From this, using results in \cite{beauvillechow}, they prove 
that on an abelian variety of dimension $\leq3$, any homologically trivial cycle is smash nilpotent. 
In fact, using the motivic hard Lefschetz theorem for abelian varieties, \cite[Theorem 5.2]{kunnemann}, one can easily 
improve the above theorem to equality of numerical and smash equivalence for abelian varieties of dimension $\leq 3$. 
In particular, Voevodsky's conjecture is true for abelian varieties of dimension $\leq3$.

Conjecturally, morphisms between motives of the same parity given by a numerically trivial cycle should be smash nilpotent. 
The author is not aware of any nontrivial examples (that is, not algebraically equivalent 
to 0 or in the subalgebra generated by skew cycles on an abelian variety under the operations of intersection product and Pontryagin 
product) or any general results in this direction. 

On abelian varieties of dimension $\geq4$, 
the Griffiths group of symmetric cycles ($\beta$ is called \emph{symmetric} if $[-1]^*\beta=\beta$) can have infinite rank, 
as shown by Fakhruddin in \cite[Theorem 4.4]{fakhruddin}. Symmetric cycles can be viewed as morphisms between motives of the same parity. 
The methods in \cite{kimura} and \cite{ks} do not directly apply to symmetric 
cycles and one is interested in knowing if these cycles are smash nilpotent. In this article we give the first fairly general results 
in this direction. 

The main theorem in this article is 
\begin{theorem*}
Voevodsky's conjecture is true for 1-cycles on a product of curves.
\end{theorem*}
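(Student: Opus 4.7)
The plan is to use the canonical Chow--K\"unneth decomposition $h(C_i) = \mathbb{1}\oplus h^1(C_i)\oplus \mathbb{L}$ of each curve factor to split $CH_1(X)_\mathbb{Q}$ as a direct sum of pieces indexed by $\epsilon \in \{0,1,2\}^n$, with associated projectors $\pi^\epsilon = \bigotimes_i \pi^{\epsilon_i}_{C_i}$. Since smash nilpotence is preserved by Chow correspondences, it suffices to prove the theorem on each piece $\pi^\epsilon_*CH_1(X)_\mathbb{Q}$ separately, and the relevant invariant is $m(\epsilon)=\#\{i:\epsilon_i=1\}$.

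When $m=0$, the piece is a Tate motive whose Chow group is $\mathbb{Q}$ generated by a numerically nontrivial class, so the numerically trivial part is zero. When $m=1$, the piece has the form $J(C_j)\otimes\mathbb{Q}$ and its elements are algebraically trivial (being degree-zero divisors on a curve), hence smash nilpotent by Voevodsky--Voisin. When $m\geq 3$ is odd, the motive $\bigotimes_{i\in S_1}h^1(C_i)\otimes\mathbb{L}^k$ is a tensor product of an odd number of oddly finite-dimensional motives (by Shermenev), hence itself oddly finite-dimensional; any cycle class in this piece is a morphism between motives of opposite parity and is thus smash nilpotent by Kimura's theorem, with no numerical triviality hypothesis required.

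The substantive case is $m\geq 2$ even, where the motive is evenly finite-dimensional and the parity argument fails. My plan is to transport such cycles to the product of Jacobians $J=\prod J(C_i)$ via the isomorphism $h^1(C_i)\cong h^1(J(C_i))$, and then use the Beauville decomposition $CH_1(J)_\mathbb{Q}=\bigoplus_s CH_1^{(s)}(J)$, where $[-1]^*$ acts as $(-1)^s$ and the numerically trivial cycles lie in $s\geq 1$. The $s$-odd pieces are skew, hence smash nilpotent by the Kahn--Sebastian result \cite{ks}. The genuinely new work is for the $s$-even pieces with $s\geq 2$: these are symmetric, numerically trivial cycles, known to be nontrivial in general by Fakhruddin's theorem. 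Here the idea is to exploit that the cycle originates on a product of curves, rather than on an arbitrary abelian variety---presumably via explicit constructions on auxiliary products of the $C_i$'s---so as to rewrite it, after pulling back to a sufficiently large product of curves, as a combination of skew and algebraically trivial cycles, both already known to be smash nilpotent.

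The principal obstacle is precisely this handling of symmetric numerically trivial cycles on products of Jacobians. A measure of its difficulty is that the theorem, applied to a cycle of the form $\beta\times C_0$ on $C_0\times C_1\times C_2$ for a transcendental zero-cycle $\beta$ on $C_1\times C_2$, implies Voevodsky's conjecture for zero-cycles on any product of two curves---a statement that goes well beyond what parity or finite-dimensionality arguments can yield, and beyond the methods of \cite{ks}. The essential new input must therefore be an explicit construction that produces smash nilpotent witnesses for these symmetric pieces on products of Jacobians of curves.
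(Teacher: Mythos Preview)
Your reduction via the Chow--K\"unneth projectors is sound as far as it goes: the cases $m=0$, $m=1$, and $m$ odd are indeed disposed of by Tate motives, algebraic triviality, and Kimura's parity theorem respectively. But the proposal is not a proof. You explicitly leave open the case $m\geq 2$ even, symmetric Beauville piece, and you say only that ``the essential new input must therefore be an explicit construction.'' That construction is the entire content of the theorem; everything else was already available from \cite{kimura} and \cite{ks}. So what you have written is a correct reformulation of where the difficulty lies, not a strategy for overcoming it.

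The paper does not proceed via Chow--K\"unneth at all. Its argument is an induction on the number $n$ of curve factors, and the inductive step works uniformly on every irreducible curve $D\subset X$, with no decomposition of $[D]$ into motivic pieces. The idea is this: let $\tilde D$ be the normalisation of $D$, take $m\gg n$ and $m\gg 2g(\tilde D)$, and form on $\tilde D^m$ a modified diagonal cycle $\Delta_e=\sum_{T\subset\{1,\dots,m\}} q_{\#T}\Delta_T$ with carefully chosen rational coefficients $q_k$. Because $S^m\tilde D$ is a projective bundle over $J(\tilde D)$, one can read off exactly which linear conditions on the $q_k$ force $\Delta_e$ to be smash nilpotent: the pushforward to $J(\tilde D)$ must have vanishing even Beauville components (the odd ones are already smash nilpotent by \cite{ks}), and one residual degree condition must hold. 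These are finitely many linear equations in $m$ unknowns, so for $m$ large they have a nontrivial solution. Pushing $\Delta_e$ forward to $X$ along the map $\tilde D^m\to X$ that uses only the first $n$ coordinates then yields a smash-nilpotence relation of the form
\[
\kappa\,[D] + (\text{cycles pushed from proper sub-products of the } C_i)\ \sim_{\mathrm{sm}}\ 0,
\]
and a separate linear-algebra lemma (Section~4 of the paper) shows one can additionally arrange $\kappa\neq 0$. Summing over the components of $\alpha$ and using that algebraically equivalent cycles are smash-equivalent, one gets $\alpha\sim_{\mathrm{sm}}$ (cycles from smaller products), and induction finishes. This is precisely the ``explicit construction'' your proposal anticipates but does not supply; note that it handles all motivic pieces of $[D]$ at once and never needs to isolate the symmetric part.

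One small correction: your closing illustration is not a genuine measure of difficulty. A numerically trivial $0$-cycle on any smooth projective variety has degree zero, hence is algebraically equivalent to zero, hence is smash nilpotent by \cite{voe,voisin}. So Voevodsky's conjecture for $0$-cycles on $C_1\times C_2$ is already trivially known, and the implication you draw from $\beta\times C_0$ does not exhibit new content.
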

As a consequence of the above, we get the following results 
\begin{theorem*}
Voevodsky's conjecture is true for 1-cycles on abelian varieties. 
\end{theorem*}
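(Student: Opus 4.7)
The plan is to deduce the abelian variety case from the main theorem by lifting a numerically trivial $1$-cycle on $A$ to a numerically trivial $1$-cycle on a power of a curve, to which the main theorem then applies. Let $A$ be an abelian variety of dimension $g$ over the algebraically closed ground field. A standard Bertini/Albanese argument produces a smooth projective curve $C \subset A$ for which the induced map $J(C) \twoheadrightarrow A$ is surjective (one takes $C$ to be a complete intersection of sufficiently general very ample hyperplane sections in a projective embedding of $A$). Composing with the Abel--Jacobi sum map $C^n \to J(C)$, where $n := \dim J(C)$, yields a surjection $\phi : C^n \twoheadrightarrow A$ of relative dimension $n - g$.

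Given a numerically trivial $1$-cycle $\alpha \in CH_1(A)_{\mathbb{Q}}$, I would set
\[
\beta := \phi^{*}(\alpha) \cdot H^{n-g} \in CH_1(C^n)_{\mathbb{Q}},
\]
where $H$ is any ample class on $C^n$. Two applications of the projection formula yield what is needed: first, $\phi_{*}\beta = d\cdot \alpha$ with $d = (H|_F)^{n-g} > 0$ for $F$ a general fiber of $\phi$ (which is ample-restricted and of dimension $n-g$); second, for any divisor $\delta$ on $C^n$ one has $\deg(\beta\cdot \delta) = \deg(\alpha \cdot \phi_{*}(H^{n-g}\cdot \delta)) = 0$, since $\phi_{*}(H^{n-g}\cdot \delta)$ is a cycle of dimension at most $g-1$ on $A$ and $\alpha$ is numerically trivial. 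Hence $\beta$ is a numerically trivial $1$-cycle on the product of curves $C^n$.

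By the main theorem applied to $C^n$, the cycle $\beta$ is smash nilpotent: there exists $N$ with $\beta^N = 0$ in $CH(C^{nN})_{\mathbb{Q}}$. Pushing forward along $\phi^{\times N} : C^{nN} \to A^N$ and using multiplicativity of pushforward on external products gives $d^N \alpha^N = 0$ in $CH(A^N)_{\mathbb{Q}}$, and since $d \neq 0$ we conclude $\alpha^N = 0$, so $\alpha$ is smash nilpotent. The reverse implication (smash nilpotent $\Rightarrow$ numerically trivial) is immediate from the compatibility of the cycle class map with products, so Voevodsky's conjecture holds for $1$-cycles on $A$.

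The essential content lies in the main theorem; the deduction above is a fairly routine ``lift and push down'' argument, and the only point requiring care is the existence of a curve $C$ with $J(C) \twoheadrightarrow A$ over an arbitrary algebraically closed field, which is classical.
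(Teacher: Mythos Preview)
Your argument is correct, and it follows a route that is close in spirit to, but more streamlined than, the paper's. The paper proceeds in two separate steps: first it treats Jacobians by using the projective bundle structure $p:S^mC\to J(C)$ for $m\gg 0$, writing $\alpha=\frac{1}{m!}p_*f_*f^*(p^*\alpha\cap c_1(\mathscr{O}(1))^{m-g})$ and invoking the main theorem on $C^m$; then it reduces an arbitrary abelian variety $A$ to the Jacobian case by choosing $B$ with $A\times B$ isogenous to some $J(C)$, passing through the Beauville decomposition to track the cycle under the isogeny. You collapse both steps into one: pick a curve $C\subset A$ with $J(C)\twoheadrightarrow A$, form $\phi:C^n\to A$, and lift $\alpha$ to $\beta=\phi^*\alpha\cdot H^{n-g}$ using any ample class $H$ on $C^n$; the projection formula then does all the work. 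What the paper gains is an explicit formula tied to the projective bundle, which fits its broader narrative about symmetric products; what you gain is economy---you avoid the projective bundle theorem, the isogeny trick, and any appeal to Beauville components. The only point I would tighten in your write-up is the justification that $d>0$: rather than speaking of a ``general fiber'', it is cleaner to note that $\phi_*(H^{n-g})\in CH_g(A)_{\mathbb{Q}}=\mathbb{Q}\cdot[A]$ and to compute the coefficient as $H^{n-g}\cdot(\phi^*L)^g/\deg(L^g)$ for an ample $L$ on $A$, which is positive since $\phi^*L$ is nef and $\phi^*(L^g)\neq 0$ by surjectivity of $\phi$.
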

In particular, the cycles constructed by Fakhruddin in \cite[Theorem 4.4, Corollary 4.6]{fakhruddin} are smash 
nilpotent. It is expected to be true that for any $i$, we can find $g$ sufficiently large such that the Beauville component $[C]_i$ 
is non-zero in the Griffiths group of the Jacobian of a generic curve of genus $g$. 
\begin{corollary*}
Numerical and smash equivalence coincide on the tautological subring of the Chow ring modulo algebraic equivalence of the 
Jacobian of a smooth projective curve.
\end{corollary*}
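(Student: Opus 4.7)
The plan is to deduce the corollary from Theorem~2 (Voevodsky's conjecture for $1$-cycles on abelian varieties) by exploiting the structure of the tautological subring together with the Fourier transform on $J(C)$. By a theorem of Beauville and Polishchuk, the tautological subring of $CH^{*}(J(C))/\!\sim_{\mathrm{alg}}$ is generated, as a $\mathbb{Q}$-algebra under both intersection and Pontryagin product, by the Beauville components $[C]_{(0)},\ldots,[C]_{(g-1)}$ of the class of the curve in its Jacobian; crucially, every one of these generators is a $1$-cycle on $J(C)$, so Theorem~2 applies to each of them individually.

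Next I would verify that the subspace of smash nilpotent classes is an ideal with respect to both products on $CH^{*}(J(C))/\!\sim_{\mathrm{alg}}$. For the intersection product this is immediate from the definition; for the Pontryagin product, the identity
\[
(\alpha*\beta)^{\boxtimes n}\;=\;(m^{n})_{*}\bigl(\alpha^{\boxtimes n}\boxtimes\beta^{\boxtimes n}\bigr),
\]
with $m^{n}\colon J(C)^{2n}\to J(C)^{n}$ coordinate-wise addition, shows that $\alpha^{\boxtimes n}=0$ forces $(\alpha*\beta)^{\boxtimes n}=0$. Together with the previous paragraph, this already disposes of every $[C]_{(i)}$ that happens to be numerically trivial, as well as of any intersection or Pontryagin monomial in the generators for which at least one factor is smash nilpotent.

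The main obstacle, and the technical heart of the argument, is to pass from smash nilpotence of individual numerically trivial generators to smash nilpotence of every numerically trivial polynomial in those generators; a polynomial expression can be numerically trivial on $J(C)$ without any single factor being so. To attack this I would invoke Beauville's Fourier transform $\mathcal{F}$ on $J(C)$, which is given by a correspondence (the Poincar\'e line bundle), hence preserves numerical and smash equivalence, preserves the tautological subring, and interchanges intersection and Pontryagin product up to sign. Combining $\mathcal{F}$ with the Beauville bigrading $\mathcal{T}^{p}_{(s)}$ (compatible with intersection, Pontryagin, and $\mathcal{F}$) and the motivic hard Lefschetz theorem for abelian varieties \cite[Theorem~5.2]{kunnemann}, I would set up an induction on codimension or on the Beauville weight $s$: at each step Fourier transform is used to trade intersection products for Pontryagin products so as to reduce a given graded homogeneous piece to a question about $1$-cycles in $\mathcal{T}$, where Theorem~2 closes the induction. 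The delicate point I expect to have to address is making this reduction effective piece by piece, i.e.\ showing that every homogeneous summand of the tautological ring can be accessed from the $1$-cycle layer $\bigoplus_{s}\mathcal{T}^{g-1}_{(s)}$ by a sequence of Pontryagin products, Fourier transforms and Lefschetz isomorphisms that is compatible with both the numerical and the smash filtrations.
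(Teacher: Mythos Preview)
Your proposal assembles the right ingredients---generators of the tautological ring, the Beauville bigrading, the Fourier transform, and the fact that smash nilpotent cycles form an ideal---but the ``main obstacle'' you identify is illusory, and the induction with motivic hard Lefschetz you sketch to get around it is both unnecessary and not clearly workable as stated.

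The point you are missing is this. By Beauville's results in \cite{beauvilletaut}, the tautological ring $R$ is generated \emph{under the intersection product alone} by the classes $N^i(w)$, and one has $N^i(w)=-\mathcal{F}(C_{(i-1)})\in R^{i}_{i-1}$. In particular $N^1(w)=\theta$ generates the entire $s=0$ layer, so $R^p_0=\mathbb{Q}\,\theta^p$; since $\theta^p$ is numerically nonzero for $0\le p\le g$, an element $\beta\in R$ is numerically trivial if and only if it lies in the ideal $I=\bigoplus_{p}\bigoplus_{s>0}R^p_s$. Now any element of $I$, written as an intersection polynomial in the $N^i(w)$, is a sum of monomials each of which contains at least one factor $N^i(w)$ with $i\ge 2$. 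But for $i\ge 2$ the class $C_{(i-1)}$ is a numerically trivial $1$-cycle on $J(C)$, hence smash nilpotent by Theorem~\ref{prop2}, and therefore so is $N^i(w)=-\mathcal{F}(C_{(i-1)})$. Since smash nilpotent cycles form an ideal under intersection, $\beta$ is smash nilpotent. That is the whole argument.

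In other words, your worry that ``a polynomial expression can be numerically trivial without any single factor being so'' does not arise: once you use the $N^i(w)$ as generators under intersection product and observe that the $s=0$ part of $R$ is exactly $\mathbb{Q}[\theta]$, every numerically trivial tautological class is automatically a sum of monomials each containing a smash nilpotent generator. You had all the pieces (you even mention the bigrading and Fourier), but you reached for an elaborate reduction when the structure theorem for $R$ already hands you the answer.
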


The proof of theorem \ref{mt} proceeds by induction on the number of factors in the product. 
For any 1-cycle $\alpha$, we show $\alpha\sim_{sm}\sum\alpha_i$ such that each $\alpha_i$ is 
obtained in a canonical way from $\alpha$ and comes from a smaller product of curves. 
To prove this assertion, we are lead to considering 1-cycles on $C^m$ of the type 
\[\Delta_e=\sum_{T\neq\emptyset, T\subset S}q_{\#T}\Delta_T\]
Here $S$ denotes the set $\{1,2,\ldots,m\}$ and $\Delta_T$ denotes the curve $C$ embedded diagonally into the factor given by $T$. 
The above ``modified diagonal'' cycle is inspired from \cite{schoen}. If $m$ is small, then it is not clear whether this cycle is 
smash nilpotent. However, for $m\gg0$, since this is a symmetric cycle and $S^mC$ is a projective bundle over $J(C)$, it is easy to deduce sufficient 
conditions on the rational coefficients so that 
\begin{itemize}
	\item $\Delta_e$ is smash nilpotent.
	\item Projecting $\Delta_e$ to a smaller product $C^n$ yields a nontrivial relation of the type $\alpha\sim_{sm}\alpha_i$.
\end{itemize}

Solving for the coefficients boils down to showing 
that certain linear homogeneous polynomials are linearly independent, which is done in section 4. 

It was brought to our attention by N. Fakhruddin, that the above method can be used to prove the following interesting 
proposition. The idea is similar to the above, if we have a symmetric 1-cycle
on $C^m$ then we can map it to $J(C)$ in the usual way. Another thing one could do is that we can first project it to $C^n$ for some $n<m$
and then again push it forward to $J(C)$. Then these two cycles in $J(C)$ can be very
different; the
first could be something simple (that is, something one understands) whereas the
second could be very different, that is, not expressible in some elementary way in
terms of the first.
\begin{proposition*}
Let $C$ be a smooth projective curve. Denote by $\alpha_j\in CH^{g-1}_j(J(C))/\sim_{alg}$ its Beauville components. If $\alpha_i=0$ for 
some $i\geq0$, then $\alpha_j=0$ for every $j>i$.
\end{proposition*}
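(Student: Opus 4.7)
The plan is to use the two different ways of pushing a symmetric $1$-cycle on $C^m$ to $J(C)$: directly via $\sigma_m$, and indirectly via a projection $p: C^m \to C^n$ followed by $\sigma_n$, as indicated in the paragraph just before the proposition. A suitably chosen modified diagonal $\Delta_e = \sum_T q_{|T|}\Delta_T$, together with the hypothesis $\alpha_i = 0$, should produce algebraic relations isolating each $\alpha_j$ for $j > i$.

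The key computation is the following. Since $\sigma_{m*}\Delta_T = [|T|]_*[C]$ and $[k]_*\alpha_j = k^{j+2}\alpha_j$, one has
\[\sigma_{m*}\Delta_e \;=\; \sum_j \alpha_j\, M_{j+2}(q), \qquad M_s(q) := \sum_{k=1}^m \binom{m}{k} q_k\, k^s.\]
An analogous computation, tracking that $p_*\Delta_T$ is the diagonal $\Delta_{T\cap\{1,\dots,n\}}^{(n)}$ on $C^n$ (and zero when $T$ is disjoint from $\{1,\dots,n\}$), and summing over $T$ with fixed $|T|=k$ via the hypergeometric count $\binom{n}{k'}\binom{m-n}{k-k'}$ of $T$ having $|T\cap\{1,\dots,n\}| = k'$, yields $\sigma_{n*}p_*\Delta_e = \sum_j \alpha_j\, Q_j(m,n;q)$, where $Q_j(m,n;q)$ is a rational linear combination of $\{M_s(q): s \le j+2\}$ with coefficients depending on $(m,n)$ coming from hypergeometric moments. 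Now choose $q$ so that $M_s(q) = 0$ for every $s = 0, 1, \dots, i+1$; this is a system of $i+2$ linear conditions, possible for $m \ge i+2$. The immediate consequence is that $Q_j(m,n;q) = 0$ for every $j < i$, so
\[\sigma_{n*}p_*\Delta_e \;=\; \alpha_i\, Q_i(m,n;q) \;+\; \sum_{j > i} \alpha_j\, Q_j(m,n;q),\]
which under the hypothesis $\alpha_i = 0$ depends, modulo algebraic equivalence, only on the $\alpha_j$ with $j > i$.

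To turn this into an actual algebraic equivalence, I would use that for $m \ge 2g-1$ the map $\pi: S^m C \to J(C)$ is a projective bundle, so that $CH^{m-1}(S^m C)/\sim_{alg}$ splits as $\xi^{m-g}\pi^*(CH^{g-1}(J(C))/\sim_{alg}) \,\oplus\, \mathbb{Q}\cdot \xi^{m-g-1}\pi^*[\mathrm{pt}]$, with $\xi$ the tautological class. Under $\alpha_i = 0$ and the conditions on $q$, $\pi_*\bar\Delta_e = \sigma_{m*}\Delta_e/m!$ is algebraically trivial, so $\bar\Delta_e$ is algebraically equivalent to a scalar multiple of the section class. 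Subtracting the corresponding scalar multiple of an explicit section $1$-cycle from $\Delta_e$ on $C^m$ (which one can do by picking a lift of the section $J(C)\to S^m C$) produces a symmetric cycle $Z \sim_{alg} 0$ on $C^m$ whose projected pushforwards $\sigma_{n*}p_*Z$ agree with those of $\Delta_e$ on the summand $\bigoplus_{j>0}\mathbb{Q}\alpha_j$ of the Beauville decomposition. For each $n \in \{1,\dots,m-1\}$ this yields
\[\sum_{j > i} \alpha_j\, Q_j(m,n;q) \;\sim_{alg}\; 0 \quad \text{in } J(C).\]
Varying $n$, and if necessary $q$ within the constraint hyperplane of Step~1, produces a system of linear relations from which each $\alpha_j$ for $j > i$ can be extracted individually.

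The principal obstacle is this last step: showing that the family of coefficient vectors $(Q_j(m,n;q))_{j>i}$, as $n$ and $q$ range as above, spans a separating subspace of linear forms on $\bigoplus_{j>i}\mathbb{Q}\alpha_j$. This reduces, after unwinding the hypergeometric-moment description of $Q_j$, to a Vandermonde-type linear independence statement about specific binomial-power polynomials, in the same combinatorial spirit as the argument carried out in Section~4 of the paper.
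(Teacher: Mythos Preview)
Your overall architecture is right---the modified diagonal, the two routes to $J(C)$, the projective-bundle splitting of $CH_1(S^mC)$---but Step~2 contains a real error. With only the conditions $M_s(q)=0$ for $s=0,\dots,i+1$, the direct pushforward is
\[
\sigma_{m*}\Delta_e \;=\; \sum_{j\ge i}\alpha_j\,M_{j+2}(q) \;=\; \sum_{j>i}\alpha_j\,M_{j+2}(q),
\]
where the last equality uses $\alpha_i=0$. This is \emph{not} known to be algebraically trivial: it is a combination of exactly the classes $\alpha_j$, $j>i$, whose vanishing you are trying to establish, and nothing prevents the $M_{j+2}(q)$ from being nonzero. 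So the claim ``$\pi_*\bar\Delta_e$ is algebraically trivial'' is unjustified, and the subsequent subtraction of a section class does not repair this, since the problem is in the $\beta_1$-component, not in $\beta_0$.

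The paper's proof avoids this by two devices you are missing. First, it reduces by induction to the single step $\alpha_i=0\Rightarrow\alpha_{i+1}=0$. Second, it imposes the much stronger set of conditions $M_s(q)=0$ for \emph{all} relevant exponents $s$ \emph{except} $s=i+2$; then $\beta_1=\alpha_i\cdot M_{i+2}(q)=0$ by hypothesis, and (together with the degree condition on $\beta_0$) one gets $\Delta_e\sim_{\mathrm{alg}}0$ outright on $C^m$. Pushing this through $C^n$ to $J(C)$ then gives a single relation $\sum_s Q_s\,\alpha_s\sim_{\mathrm{alg}}0$, and the remaining task is to show that $Q_{i+1}\neq 0$ for some admissible choice of $(m,n,q)$---this is precisely the content of Lemma~\ref{lem2}. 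Your ``principal obstacle'' (spanning a separating family of linear forms by varying $n$ and $q$) is therefore not the real issue; the real issue is that your weaker constraints on $q$ never force $\Delta_e$ to be algebraically trivial in the first place.
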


\vskip 3mm
\noindent {\bf Acknowledgements.} 
I am very grateful to N. Fakhruddin for several useful suggestions.
He pointed out to me proposition \ref{p7} and his comments lead to a simplification of the proof of lemma \ref{soleq}, 
among other improvements. I would like to 
thank J. Biswas, D.S. Nagaraj and V. Srinivas for useful discussions, and Prof. J.P. Murre for his generous encouragement 
and for his lectures at the Tata Institute, Mumbai, from where I learnt about these problems.
This work was done during my visit to the Institut f\"{u}r Mathematik at the Humboldt Universit\"{a}t zu Berlin,
made possible by a grant by the IMU Berlin Einstein Foundation. I would like to thank the institute for its hospitality.

\setcounter{theorem}{0}
\section{1-cycles on products of curves}
For a smooth projective curve $C$, we denote its Jacobian by 
$J(C)$. In this section we show that numerically trivial 1-cycles on products of smooth projective curves are smash nilpotent.

Let $\t{D}$ be a smooth projective curve of genus $g$ and let $m$ be an integer such that $m>2g+2$. 
Let $S$ denote the set $\{1,2,\ldots,m\}$. Let $p_i:\t{D}^m\to \t{D}$ denote the projection onto the $i$-th factor. 
For every nonempty $T\subset S$ consider the morphism 
$\phi_T:\t{D}\to \t{D}^m$ which is defined by 
\[ p_i\circ\phi_T(d)=\left\{\begin{array}{c} d\qquad\qquad \text{for } i\in T\\
				d_0 \qquad\qquad \text{for } i\notin T
                    \end{array}\right\}
\]
and define $\Delta_T$ to be $\phi_{T*}(\t{D})$. 

The modified diagonal cycle was introduced in \cite{schoen}, who show that it is homologically trivial. 
We define a more general modified diagonal cycle in the Chow group of $\t{D}^m$ to be 
\begin{equation}\label{nmoddiag}
\Delta_e=\sum_{T\neq\emptyset, T\subset S}q_{\#T}\Delta_T
\end{equation}
We want to choose the coefficients $q_i,i=1,2\ldots,m$, appropriately so that $\Delta_e$ becomes smash nilpotent.
Consider the following two conditions 
\begin{enumerate}
	\item[(S1)] For every {\bf even} integer $i$ in the set $\{0,1,2,\ldots,g-1\}$,
		\[\Bigg(\sum_{k=1}^mq_k\left(\begin{array}{c}m\\k\end{array}\right)k^{2+i}\Bigg)=0\]
	\item[(S2)] $\sum_{k=1}^mq_k\left(\begin{array}{c}m\\k\end{array}\right)k=0$
\end{enumerate}
\begin{remark}The above is a homogeneous system of equations in $q_k$'s and since $m>2g+2$, there are more variables than equations. 
Thus, it will always have nontrivial solutions. 
\end{remark}

Let $f:\t{D}^{m}\to S^m\t{D}$ denote the quotient by the group $S_m$, which acts on $\t{D}^{m}$ by permutations. 
On $S^m\t{D}$ consider the (reduced) subvariety $\Delta^s_i:=f(\Delta_T)$, for some $T$ with $\#T=i$ (this is clear depends 
only on $\#T$). 
\begin{proposition}\label{p1}We have the following relation in $CH_1(\t{D}^m)$
\[\frac{1}{i!(m-i)!}f^*\Delta^s_i=\sum_{\#T=i}\Delta_T\]
\end{proposition}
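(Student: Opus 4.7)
The approach is to realize $\Delta^s_i$ as the pushforward $f_*\Delta_T$ and then to compute $f^*\Delta^s_i$ via the standard identity $f^*f_* = \sum_{g \in S_m} g_*$ for a finite quotient map.

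First, I would analyze how $S_m$ acts on $\Delta_T$. A generic point $\phi_T(d)\in \Delta_T$ has value $d\neq d_0$ in the coordinates indexed by $T$ and value $d_0$ in the others, so its stabilizer in $S_m$ consists exactly of those permutations preserving the partition $\{T,\,S\setminus T\}$, namely the Young subgroup $S_T\times S_{S\setminus T}$ of order $i!(m-i)!$. Since this subgroup also fixes $\Delta_T$ pointwise, a generic $S_m$-orbit in $\t{D}^m$ meets $\Delta_T$ in a single point, so $f|_{\Delta_T}\colon \Delta_T\to\Delta^s_i$ is birational and $f_*[\Delta_T]=[\Delta^s_i]$.

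Second, I would invoke the identity $f^*f_*[Z]=\sum_{g\in S_m}g_*[Z]$, which holds for a finite quotient of smooth varieties (and follows from the cycle-theoretic equality $\t{D}^m\times_{S^m\t{D}}\t{D}^m=\sum_g \Gamma_g$ combined with the projection formula, noting that $S^m\t{D}$ is smooth for a smooth curve). Applying it with $Z=\Delta_T$, and using $g\cdot\Delta_T=\Delta_{g(T)}$, gives
\[f^*\Delta^s_i = \sum_{g\in S_m}\Delta_{g(T)} = i!(m-i)!\sum_{\#T'=i}\Delta_{T'},\]
where the last equality uses that each $T'$ with $\#T'=i$ equals $g(T)$ for exactly $|S_T\times S_{S\setminus T}|=i!(m-i)!$ permutations. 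Dividing by $i!(m-i)!$ yields the desired formula.

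The only step calling for a little care is verifying that $f|_{\Delta_T}$ has degree one, so that $f_*[\Delta_T]$ acquires no numerical factor; this is immediate once the stabilizer computation is in hand. I do not anticipate any serious obstacle, as the argument is essentially a bookkeeping exercise with a Galois cover.
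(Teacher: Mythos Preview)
Your proof is correct and rests on the same Galois structure of $f:\t{D}^m\to S^m\t{D}$ that the paper uses. The only difference is that the paper applies the identity $f_*f^*=m!$ (pushing forward to $S^m\t{D}$ to pin down the constant), whereas you apply the companion identity $f^*f_*=\sum_{g\in S_m}g_*$ (pulling back to $\t{D}^m$); both routes require knowing that $f|_{\Delta_T}$ is birational, a point you make explicit via the stabilizer computation while the paper leaves it implicit.
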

\begin{proof}
Since $f:\t{D}^m\to S^m\t{D}$ is a finite flat Galois morphism, it is clear that $f^*\Delta^s_i$ is a multiple of $\sum_{\#T=i}\Delta_T$.
Applying $f_*$ on both cycles and using the fact that $f_*f^*=m!$ for a finite map, we get the desired result.
\end{proof}
Since $m>2g+2$, $S^m\t{D}$ is the projective bundle associated to a locally free sheaf on $J(\t{D})$. We may take 
$\sc{O}(1)$ to be the line bundle associated to the (reduced) divisor $f(p_1^{-1}(d_1))$.
\begin{proposition}\label{p2}
$deg(f_*(\Delta_T)\cap c_1(\sc{O}(1)))=(m-1)!(\#T)$
\end{proposition}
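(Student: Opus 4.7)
The plan is to pass to $\tilde D^m$ via the projection formula and perform the intersection calculation there. Setting $H=f(p_1^{-1}(d_1))$ so that $c_1(\sc{O}(1))=[H]$, the projection formula for the finite flat cover $f:\tilde D^m\to S^m\tilde D$ of degree $m!$ gives
\[
\deg\bigl(f_*(\Delta_T)\cap c_1(\sc{O}(1))\bigr)=\deg\bigl(\Delta_T\cdot f^*H\bigr).
\]

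First, I would identify $f^*H$ as a Cartier divisor on $\tilde D^m$. Its support equals $f^{-1}(|H|)=\bigcup_{i=1}^m p_i^{-1}(d_1)$. The key observation is that $f$ is \'etale over the open locus of pairwise distinct $m$-tuples, and each divisor $p_i^{-1}(d_1)$ meets the big diagonal of $\tilde D^m$ only in codimension $2$; hence the generic point of every $p_i^{-1}(d_1)$ lies in the \'etale locus of $f$, so the multiplicities of $f^*H$ along the components can be read off transversally. Together with the pushforward identity $f_*(p_1^{-1}(d_1))=(m-1)!\cdot H$---which follows because $f|_{p_1^{-1}(d_1)}$ has generic degree $(m-1)!$, corresponding to the $(m-1)!$ orderings of the remaining coordinates of a general fiber of $H$---this pins down $f^*H$ as a multiple of $\sum_i p_i^{-1}(d_1)$ and is responsible for the overall factor $(m-1)!$ in the final answer.

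Second, I would compute the intersections $\Delta_T\cdot p_i^{-1}(d_1)$ directly. For $i\in T$ the $i$-th coordinate along $\Delta_T$ is the varying parameter $d$, so the intersection consists of the single transverse point $\phi_T(d_1)$ (the tangent vector $d\phi_T(\partial/\partial d)$ has nonzero $i$-th component), contributing $1$ to the degree. For $i\notin T$ the coordinate is the constant $d_0\neq d_1$ and the intersection is empty. Summing over $i$ contributes $\#T$, which combined with the normalization from the first step yields the stated $(m-1)!(\#T)$.

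The main obstacle is the first step: one must carefully confirm that the generic point of each $p_i^{-1}(d_1)$ lies outside the ramification locus of $f$, so that the multiplicities of $f^*H$ are genuinely governed by the \'etale part of $f$, and then track the normalization of $\sc{O}(1)$ correctly in passing between $H$ as a reduced divisor and $f_*(p_1^{-1}(d_1))$ as a pushforward cycle. Once this bookkeeping is settled, the intersection computation itself is routine.
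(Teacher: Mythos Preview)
Your overall plan coincides with the paper's: use the projection formula to pass to $\deg\bigl(\Delta_T\cdot f^*c_1(\mathscr{O}(1))\bigr)$, express $f^*H$ in terms of $\sum_i p_i^{-1}(d_1)$ (the paper does this by invoking the $f_*f^*=m!$ argument from the preceding proposition), and then intersect $\Delta_T$ with each $p_i^{-1}(d_1)$, exactly as in your second step.

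The gap is in your first step, where you claim that the identity $f_*(p_1^{-1}(d_1))=(m-1)!\,H$ ``is responsible for the overall factor $(m-1)!$.'' It is not: your own \'etale argument already forces $f^*H=\sum_{i=1}^m p_i^{-1}(d_1)$ with multiplicity one along each component, and the $f_*f^*=m!$ bookkeeping confirms this rather than contradicting it (write $f^*H=c\sum_i p_i^{-1}(d_1)$, push forward, and use $f_*p_i^{-1}(d_1)=(m-1)!\,H$ to obtain $m!\,H=c\cdot m\cdot (m-1)!\,H$, hence $c=1$). Your second step then correctly yields $\deg(\Delta_T\cdot f^*H)=\#T$, with no $(m-1)!$ in sight. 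In fact the paper's own assertion $f^*c_1(\mathscr{O}(1))=(m-1)!\sum_i p_i^{-1}(d_1)$ carries the same spurious factor, so the proposition as stated should read $\#T$ rather than $(m-1)!\,(\#T)$; this is harmless downstream, since the result is used only to derive condition~(S2), which is insensitive to an overall nonzero scalar.
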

\begin{proof}
It suffices to compute the degree of $\Delta_T\cap f^*c_1(\sc{O}(1))$. Arguing as in the proof of proposition \ref{p1}, 
we get $f^*(c_1(\sc{O}(1)))=(m-1)!\sum_{i=1}^mp_i^{-1}(d_1)$. Taking the set theoretic intersection of $\Delta_T$ with $\sum_{i=1}^mp_i^*\sc{O}_{\t{D}}(d_1)$ 
gives the desired result.
\end{proof}

The Beauville decomposition of the Chow group of 1-cycles on 
$J(\t{D})$ is given by (see \cite[Proposition 3a]{beauvillechow})
\[
CH^{g-1}_{\mbb{Q}}(J(\t{D}))=\bigoplus_{s=0}^{g-1}CH^{g-1}_s(J(\t{D}))\\
\]
\[CH^{g-1}_s(J(\t{D}))=\{x\in CH^{g-1}_{\mbb{Q}}(J(\t{D}))\Big\vert[n]_*(x)=n^{2+s}x\}\]
Write the image of the curve embedded into $J(\t{D})$ using the base point $d_0$ as
$[\t{D}]=\sum_{i=0}^{g-1}\alpha_i$ where $\alpha_i$ satisfies 
$[n]_*\alpha_i=n^{2+i}\alpha_i$.

\begin{lemma}\label{l1}
Let $q_i,i=1,2,\ldots,m$ be rational numbers satisfying conditions (S1) and (S2). Then the cycle $\Delta_e$ in equation \eqref{nmoddiag} is smash nilpotent.
\end{lemma}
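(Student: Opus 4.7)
My plan is to exploit the hypothesis $m>2g+2$ twice: first via Proposition \ref{p1} to rewrite $\Delta_e$ as the pullback of a cycle on $S^m\t{D}$, and second via the resulting projective bundle structure on $S^m\t{D}$ over the Jacobian. By Proposition \ref{p1},
\[
\Delta_e = f^*\beta,\qquad \beta := \sum_{k=1}^m\frac{q_k}{k!(m-k)!}\,\Delta_k^s \in CH_1(S^m\t{D}).
\]
Since smash-nilpotence is preserved under flat pullback, it suffices to show that $\beta$ is smash-nilpotent on $S^m\t{D}$.

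Write $\pi\colon S^m\t{D}=\mathbb{P}(E)\to J(\t{D})$ for the bundle projection, with $E$ of rank $m-g+1$, and set $\xi=c_1(\sc{O}(1))$. Since $\beta$ is a 1-cycle and $\dim J(\t{D})=g$, the projective bundle formula forces
\[
\beta \;=\; \xi^{m-g-1}\pi^*\gamma_0 \;+\; \xi^{m-g}\pi^*\gamma_1,\qquad \gamma_0\in CH_0(J(\t{D})),\ \gamma_1\in CH_1(J(\t{D})).
\]
Smash-nilpotence is stable under finite sums, pullbacks, and intersection with a fixed class, so the problem reduces to showing that both $\gamma_0$ and $\gamma_1$ are smash-nilpotent on $J(\t{D})$.

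To handle $\gamma_1$, note that $\pi_*\xi^{m-g-1}=0$ and $\pi_*\xi^{m-g}=1$, so $\gamma_1=\pi_*\beta$. The key geometric identity is $\pi\circ f\circ\phi_T=[\#T]\circ\iota$, where $\iota\colon\t{D}\hookrightarrow J(\t{D})$ is the Abel--Jacobi embedding based at $d_0$. Combined with $[\t{D}]=\sum_i\alpha_i$ and $[n]_*\alpha_i=n^{2+i}\alpha_i$ this gives
\[
\gamma_1 \;=\; \frac{1}{m!}\sum_{i=0}^{g-1}\Bigg(\sum_{k=1}^m q_k\binom{m}{k}k^{\,2+i}\Bigg)\alpha_i.
\]
Condition (S1) kills the coefficient of every $\alpha_i$ with $i$ even, so $\gamma_1$ is a rational combination of the odd-index Beauville components. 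Because $[-1]^*\alpha_i=(-1)^i\alpha_i$, these are skew cycles, hence smash-nilpotent by the result of \cite{ks} quoted in the introduction.

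To handle $\gamma_0$, I use that a 0-cycle of degree zero on a smooth connected variety is algebraically trivial and therefore smash-nilpotent by the theorem of Voevodsky--Voisin \cite{voe,voisin}. Pairing the decomposition of $\beta$ with $\xi$ and using $\pi_*\xi^{m-g+1}=s_1(E)$ yields
\[
\deg(\beta\cdot\xi) \;=\; \deg(\gamma_0) + \deg(s_1(E)\cdot\gamma_1).
\]
Proposition \ref{p2} together with condition (S2) evaluates the left hand side to $\tfrac{1}{m}\sum_k q_k\binom{m}{k}k = 0$. On the right, the case $i=0$ of (S1) eliminated $\alpha_0$ from $\gamma_1$; every remaining $\alpha_i$ with $i\geq 1$ is numerically trivial on $J(\t{D})$, since Poincar\'{e}'s formula identifies the numerical class of $[\t{D}]$ with $\alpha_0$. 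Hence $\deg(s_1(E)\cdot\gamma_1)=0$ and $\deg(\gamma_0)=0$, completing the proof.

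The main technical obstacle I anticipate is the bookkeeping in the projective bundle decomposition, and verifying that condition (S2)---whose shape $k^1$ lies outside the family of polynomials $k^{2+i}$ appearing in (S1)---is precisely the input needed to control $\deg\gamma_0$, once (S1) has forced $\gamma_1$ into the numerically trivial part of $CH_1(J(\t{D}))$.
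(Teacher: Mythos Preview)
Your argument is correct and follows the same route as the paper: pass from $\Delta_e$ to the symmetric cycle on $S^m\t D$ via Proposition~\ref{p1}, use the projective bundle structure over $J(\t D)$ to split into a $1$-cycle $\gamma_1$ and a $0$-cycle $\gamma_0$, kill the even Beauville pieces of $\gamma_1$ with (S1) and invoke \cite{ks} for the odd ones, then use (S2) together with Proposition~\ref{p2} to show $\deg\gamma_0=0$. The only cosmetic differences are your normalization $\beta=\tfrac{1}{m!}\Gamma_e$ and the fact that you make the Segre term $s_1(E)\cdot\gamma_1$ explicit, whereas the paper simply notes that $\beta_1$ is smash nilpotent (hence numerically trivial) before computing $\deg\beta_0$.
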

\begin{proof}
Define a 1-cycle on $S^m\t{D}$ by 
\[\Gamma_e:=\sum_{k=1}^mq_k\left(\begin{array}{c}m\\k\end{array}\right)\Delta_k^s\]
From proposition \ref{p1} it is clear that $f^*(\Gamma_e)=m!\Delta_e$ and it suffices to show that $\Gamma_e$ is smash nilpotent.
Using the base point $d_0$ get a map $p:S^m\t{D}\to J(\t{D})$. Since $m>2g-1$, $p$ is a 
projective bundle associated to a locally free sheaf. In particular, the cycle $\Gamma_e$, which is a 1-cycle, may be written as, 
see \cite[Theorem 3.3(b), Proposition 3.1(a)(i)]{fulton},
\[\Gamma_e=c_1(\ms{O}{}(1))^{m-g-1}\cap p^*\beta_0\oplus c_1(\ms{O}{}(1))^{m-g}\cap p^*\beta_1\] 
In the above equation, $\beta_i\in CH_i(J(\t{D}))$ are given by 
\begin{equation}\beta_0=p_*(\Gamma_e\cap c_1(\ms{O}{}(1)))-p_*(c_1(\ms{O}{}(1))^{m-g+1}\cap p^*\beta_1)\qquad\text{and}\qquad \beta_1=p_*(\Gamma_e)\end{equation}
First we show $\beta_1$ is smash nilpotent.  
\begin{align*}\beta_1&=\sum_{k=1}^mq_k\left(\begin{array}{c}m\\k\end{array}\right)p_*(\Delta_k^s)=
		\sum_{k=1}^mq_k\left(\begin{array}{c}m\\k\end{array}\right)[k]_*([\t{D}])\\
		&=\sum_{k=1}^mq_k\left(\begin{array}{c}m\\k\end{array}\right)[k]_*\Big(\sum_{i=0}^{g-1}\alpha_i\Big)\\
			&=\sum_{i=0}^{g-1}\Bigg(\sum_{k=1}^mq_k\left(\begin{array}{c}m\\k\end{array}\right)k^{2+i}\Bigg)\alpha_i
\end{align*}
Using \cite[Proposition 1]{ks}, $\alpha_i$ is smash nilpotent for $i$ odd. Since the $q_k$'s satisfy (S1),
it follows that $\beta_1$ is smash nilpotent. In particular, $\beta_1$ is numerically trivial.

Next we compute the degree of $\beta_0$. Since $\beta_1$ is numerically trivial,
\[deg(\beta_0)=deg(p_*(\Gamma_e\cap c_1(\ms{O}{}(1))))\]
A cycle of dimension 0 on an abelian variety is smash nilpotent if and only if its degree is 0. 
Using proposition \ref{p2}, it is easily checked that for $\beta_0$ to be smash nilpotent, we need that 
\[\sum_{k=1}^mq_k\left(\begin{array}{c}m\\k\end{array}\right)k=0\]
From this, it follows that $\Gamma_e$ is smash nilpotent and so $\Delta_e$ is 
smash nilpotent. 
\end{proof}

Let $C_i, i=1,2,\ldots,n$ be smooth, projective curves. Denote by $X$ the product $C_1\times C_2\times \ldots \times C_n$. 
Let $p'_i:X\to C_i$ denote the projection onto the $i$-th factor.
Let $j:D\hookrightarrow X$ be an irreducible curve and let $\phi_n:\t{D}\to D$ denote its normalization. Denote the composite 
$\phi_n\circ j$ by $\t{j}:\t{D}\to X$. Let $g$ denote the genus of $\t{D}$. Let $m$ be an integer such that $m>n,2g+2$.

Define a morphism $\psi:\t{D}^m\to X$ by projecting onto the first $n$ coordinates as follows
\begin{align*}\psi:\t{D}^m\to X\qquad\qquad\psi:=(p'_1\circ\t{j})\times (p'_2\circ\t{j})\times\ldots\times (p'_n\circ\t{j})\end{align*}
The morphisms $\t{j}$ factor as  
\begin{equation}\label{npsi}\xymatrix{&\t{D}^m\ar[d]^{\psi}&  \\
	\t{D}\ar[r]_{\t{j}}\ar[ur]^{\Delta}  & X }
\end{equation}
The 1-cycle $\psi_*(\Delta_e)$ on $X$ is smash nilpotent. We want to understand what this 1-cycle looks like. 
Write the set $S$ as a disjoint union 
\[S=S_0\sqcup S'\qquad S_0=\{1,2,\ldots, n\}\qquad S'=\{n+1,\ldots,m\}\]
By $\u{x}$ we will denote closed points of $X$. For $T\subset S_0$ and $\u{c}$, define 
\[\zeta_T^{\u{c}}:X\to X\]
as follows
\[ p_i\circ \zeta_T^{\u{c}}(\u{x})=\left\{\begin{array}{c} x_i\qquad\qquad \text{for } i\in T\\
				c_i \qquad\qquad \text{for } i\notin T
                    \end{array}\right\}
\]
For example, if $T=S_0$, then $\zeta_T^{\u{c}}$ is the identity. In words, $\zeta_T^{\u{c}}$ is the composite of 
a projection onto the $T$ coordinates and then an inclusion into $X$ using base points given by $\u{c}$.
\begin{remark}\label{rem2}
It is clear that if $\u{v}, \u{w}\in X$ are two closed points, then for any cycle $\alpha$, the cycles $\zeta^{\u{v}}_{T*}(\alpha)$ and 
$\zeta^{\u{w}}_{T*}(\alpha)$ are algebraically equivalent.
\end{remark} 
\noindent Observe that $\psi_*(\Delta_T)=0$ if $T\cap S_0=\emptyset$. 

\noindent Define $\u{v}$ by $v_i=p'_i\circ \t{j}(d_0)$.
It follows that  
\begin{align*}
	\psi_*(\Delta_e)&=\psi_*(\sum_{T\neq\emptyset, T\subset S}q_{\#T}\Delta_T)=\sum_{T\neq\emptyset, T\subset S}q_{\#T}\psi_*(\Delta_T)\\
				&=\sum_{T\subset S, T\cap S_0\neq\emptyset }q_{\#T}\zeta_{(T\cap S_0)*}^{\u{v}}(\t{j}_*(\t{D}))
\end{align*}
One has $T\cap S_0=S_0$ if and only if $S_0\subset T$. 
Let us denote by $\sc{U}$ the collection of subsets of $S$ having this property and let us denote  by $\sc{S}$ 
the collection of subsets of $S$ with the property $T\cap S_0\neq\emptyset$. Then we have
\begin{align}
	\psi_*(\Delta_e)&=\sum_{T\in \sc{U}}q_{\#T}\zeta_{(T\cap S_0)*}^{\u{v}}(\t{j}_*(\t{D}))+\sum_{T\in \sc{S}\setminus\sc{U}}q_{\#T}\zeta_{(T\cap S_0)*}^{\u{v}}(\t{j}_*(\t{D}))\nonumber\\
	&=\sum_{T\in \sc{U}}q_{\#T}[D]+\sum_{T\in \sc{S}\setminus\sc{U}}q_{\#T}\zeta_{(T\cap S_0)*}^{\u{v}}([D])\nonumber\\
	&=[D]\Big(\sum_{T\in \sc{U}}q_{\#T}\Big)+\sum_{T\in \sc{S}\setminus\sc{U}}q_{\#T}\zeta_{(T\cap S_0)*}^{\u{v}}([D])\nonumber\\
	&=[D]\Big(\sum_{i=0}^{m-n}\left(\begin{array}{c}m-n\\i\end{array}\right)q_{i+n}\Big)+\sum_{T\in \sc{S}\setminus\sc{U}}q_{\#T}\zeta_{(T\cap S_0)*}^{\u{v}}([D])\label{e3}
\end{align}
In equation \eqref{e3} we have put, for $T\in \sc{U}$, $\#(T\setminus S_0)=i$, and so $\#T=i+n$. For each $i$, there are exactly 
$\left(\begin{array}{c}m-n\\i\end{array}\right)$ choices of $T$'s in $\sc{U}$ with $\#T=i$.

In section \ref{se}, lemma \ref{soleq} it is shown for $n\geq3$, we can find integers $m>n,2g+2$ and rational numbers $q_i, i=1,2,\ldots,m$ 
satisfying (S1) and (S2) and  
\begin{enumerate}
	\item[(S3)] $\sum_{i=0}^{m-n}\left(\begin{array}{c}m-n\\i\end{array}\right)q_{i+n}\neq0$
\end{enumerate}

Let $\kappa:=\sum_{i=0}^{m-n}\left(\begin{array}{c}m-n\\i\end{array}\right)q_{i+n}$, then $\kappa\neq0$ and from the above we conclude 
that the following cycle on $X$ is smash nilpotent
\begin{equation}
\label{e6}[D] +\frac{1}{\kappa}\sum_{T\in \sc{S}\setminus\sc{U}}q_{\#T}\zeta^{\u{v}}_{(T\cap S_0)*}([D])
\end{equation}
Observe that the second term consists of cycles coming from a smaller product of curves.
We now prove the main theorem of this section 
\begin{theorem}\label{mt}
Numerical and smash equivalence coincide for 1-cycles on a product of curves.
\end{theorem}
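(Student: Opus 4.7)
The plan is to proceed by induction on $n$, the number of factors in $X=C_1\times\cdots\times C_n$. The base cases are $n=1$, which is trivial since a 1-cycle on a smooth projective curve is numerically zero iff zero, and $n=2$, where a 1-cycle is a divisor on a smooth surface: the N\'eron-Severi group has finite rank and numerical equivalence agrees with algebraic equivalence on divisors with $\mathbb{Q}$-coefficients, so a numerically trivial 1-cycle is algebraically trivial and hence smash nilpotent by the Voevodsky--Voisin theorem.

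For the inductive step, assume the result for products of fewer than $n$ curves and let $\alpha=\sum_i a_i[D_i]$ be a numerically trivial 1-cycle on $X$ with $n\geq 3$. Let $g:=\max_i g_i$, where $g_i$ is the genus of the normalization $\t{D}_i$, choose $m>\max(n,2g+2)$, and by Lemma \ref{soleq} find common rationals $q_1,\ldots,q_m$ satisfying (S1) for this $g$, (S2), and (S3). Since (S1) for $g$ implies (S1) for every $g_i$, the same $q$'s make \eqref{e6} smash nilpotent for every $D_i$ with a common constant $\kappa\neq 0$. Applying \eqref{e6} to each $D_i$, multiplying by $a_i$, and summing gives
\begin{equation*}
\alpha\;\sim_{sm}\;-\frac{1}{\kappa}\sum_i a_i\sum_{T\in\sc{S}\setminus\sc{U}}q_{|T|}\,\zeta^{\u{v}_i}_{(T\cap S_0)*}([D_i]).
\end{equation*}

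Regroup the inner sum by $T':=T\cap S_0$, a nonempty proper subset of $S_0$. The key observation is that
\[\sum_{T\cap S_0=T'}q_{|T|}\;=\;\sum_{j=0}^{m-n}\binom{m-n}{j}q_{|T'|+j}\;=:\;\mu_{|T'|}\]
depends only on $|T'|$, not on $T'$ itself nor on $i$. Using Remark \ref{rem2} to shift each varying base point $\u{v}_i$ to a fixed $\u{v}_0\in X$ modulo algebraic (hence smash) equivalence, and factoring $\zeta^{\u{v}_0}_{T'*}=\iota^{\u{v}_0}_{T'*}\circ\pi_{T'*}$, where $\pi_{T'}\colon X\to X_{T'}$ is the projection and $\iota^{\u{v}_0}_{T'}\colon X_{T'}\hookrightarrow X$ is the inclusion at the remaining coordinates, one collects
\begin{equation*}
\alpha\;\sim_{sm}\;-\frac{1}{\kappa}\sum_{\emptyset\neq T'\subsetneq S_0}\mu_{|T'|}\,\iota^{\u{v}_0}_{T'*}(\pi_{T'*}\alpha).
\end{equation*}
Since $\pi_{T'}$ is proper, the projection formula implies $\pi_{T'*}\alpha$ is numerically trivial on $X_{T'}$, a product of $|T'|<n$ curves, and hence smash nilpotent by induction. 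Pushforward along the closed immersion $\iota^{\u{v}_0}_{T'}$ preserves smash nilpotence, since the external powers pull through the pushforward and a rationally trivial cycle remains rationally trivial after proper pushforward. Therefore every term is smash nilpotent, and so is $\alpha$.

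The main obstacle is ensuring the correction in \eqref{e6} collects cleanly into $\pi_{T'*}\alpha$: a priori the coefficients depend on $D_i$ through $g_i$ and on the specific $T$ (not just on $|T|$), which would yield terms $\iota_{T'*}(\gamma_{T'})$ with $\gamma_{T'}$ some linear combination of the $\pi_{T'*}[D_i]$'s of uncertain numerical behavior. This is resolved by three ingredients: a uniform choice of $q$'s via $g=\max g_i$, the symmetry making $\mu_{|T'|}$ depend only on $|T'|$, and Remark \ref{rem2} for equating base points, which together allow the factor $a_i$ to be absorbed into $\pi_{T'*}\alpha$, producing a numerically trivial cycle on a smaller product to which induction applies.
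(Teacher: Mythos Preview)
Your proof is correct and follows essentially the same approach as the paper: induction on $n$, uniform choice of $m$ and the $q_k$'s via the maximum genus so that (S1), (S2), (S3) hold simultaneously for all components, application of \eqref{e6} to each $D_i$, use of Remark~\ref{rem2} to equalize base points modulo algebraic equivalence, and then the inductive hypothesis on the correction terms coming from proper subproducts. Your explicit regrouping by $T'=T\cap S_0$ and the factorization $\zeta^{\u{v}_0}_{T'*}=\iota^{\u{v}_0}_{T'*}\circ\pi_{T'*}$ make transparent what the paper leaves implicit in the phrase ``pushforward of a cycle from a smaller product of curves,'' but the argument is the same.
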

\begin{proof}
Let $\alpha$ be a one dimensional cycle on $X$ such that 
\[\alpha=\sum_{i=1}^ta_iD_i\] 
where the $D_i$ are its distinct irreducible components.  
Let $\t{D}_i$ denote the normalization of $D_i$ and define as above
\[\t{j}_i:\t{D}_i\to X\] 
Choose a base point $d_i\in \t{D}_i$. Using the $d_i$ define points $\u{v}^i\in X$ 
by $v^i_j=p_j\circ \t{j}_i(d_i)$. Now choose a large integer $m>n,2g(\t{D}_i)+2$, and corresponding to this a collection 
of rational numbers such that (S1), (S2) and (S3) are satisfied. Carrying out the preceding discussion for 
each $i$, we get as in equation \eqref{e6} that the following cycle is smash nilpotent
\begin{align}
	[D_i] +\frac{1}{\kappa}\sum_{T\in \sc{S}\setminus\sc{U}}q_{\#T}\zeta^{\u{v}^i}_{(T\cap S_0)*}([D_i])
\end{align}
Multiplying with $a_i$ and summing over $i$, we get 
\[\alpha+\frac{1}{\kappa}\sum_{T\in \sc{S}\setminus\sc{U}}q_{\#T}\sum_{i=1}^ta_i\zeta^{\u{v}^i}_{(T\cap S_0)*}([D_i])\]
is smash nilpotent. Finally, going modulo algebraic equivalence and using remark \ref{rem2} we get the following cycle is 
smash nilpotent
\begin{align*}
	\alpha+\frac{1}{\kappa}\sum_{T\in \sc{S}\setminus\sc{U}}q_{\#T}\sum_{i=1}^ta_i\zeta^{\u{v}^i}_{(T\cap S_0)*}([D_i])&=\alpha+\frac{1}{\kappa}\sum_{T\in \sc{S}\setminus\sc{U}}q_{\#T}\sum_{i=1}^ta_i\zeta^{\u{v}^1}_{(T\cap S_0)*}([D_i])\\
		&=\alpha+\frac{1}{\kappa}\sum_{T\in \sc{S}\setminus\sc{U}}q_{\#T}\zeta^{\u{v}^1}_{(T\cap S_0)*}(\sum_{i=1}^ta_iD_i)\\
		&=\alpha+\frac{1}{\kappa}\sum_{T\in \sc{S}\setminus\sc{U}}q_{\#T}\zeta^{\u{v}^1}_{(T\cap S_0)*}(\alpha)
\end{align*}
Since $\alpha$ is numerically trivial it follows that $\zeta^{\u{v}^1}_{(T\cap S_0)*}(\alpha)$ is 
numerically trivial. By induction on $n$, since $\zeta^{\u{v}^1}_{(T\cap S_0)*}(\alpha)$ is the pushforward of a cycle from a smaller product of curves, we may 
assume that it is smash nilpotent. Thus, $\alpha$ being the sum of smash nilpotent cycles, is smash nilpotent. The base case for the induction is the product of 
two curves, which is a surface. On a surface numerical and algebraic equivalence coincide, and so numerical and smash equivalence 
coincide, see \cite[Cor. 3.3]{voe}.
\end{proof}

\begin{proposition}\label{p7}
Let $C$ be a smooth projective curve. Denote by $\alpha_j\in CH^{g-1}_j(J(C))/\sim_{alg}$ its Beauville components. If $\alpha_i=0$ for 
some $i\geq0$, then $\alpha_j=0$ for every $j>i$.
\end{proposition}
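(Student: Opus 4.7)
Following Fakhruddin's hint, the plan is to combine the two natural pushforwards of a symmetric cycle on $C^m$ to $J(C)$---directly via $\sigma_m$, and via projection to $C^n$ followed by $\sigma_n$---in a way that leverages the hypothesis $\alpha_i\sim_{alg}0$ to upgrade the smash nilpotence of Lemma \ref{l1} to algebraic triviality. Fix a target $j>i$; the goal is $\alpha_j\sim_{alg}0$. I would take $m\gg n\gg g$ and consider the modified diagonal $\Delta_e=\sum_{T\neq\emptyset,T\subset S}q_{\#T}\Delta_T$ on $C^m$, with the rational numbers $q_k$ subject to the $g$ homogeneous linear conditions
\begin{itemize}
\item[(T1)] $\sum_k q_k\binom{m}{k}k^{2+\ell}=0$ for every $\ell\in\{0,1,\ldots,g-1\}\setminus\{i\}$,
\item[(T2)] $\sum_k q_k\binom{m}{k}k=0$ (equation (S2) of Section 2).
\end{itemize}
For $m$ large the solution space has positive dimension.

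The first step is to show that, under (T1), (T2) and the hypothesis $\alpha_i\sim_{alg}0$, the cycle $\Delta_e$ is algebraically trivial on $C^m$---not merely smash nilpotent. I would re-run the proof of Lemma \ref{l1}: via the projective bundle $p:S^m C\to J(C)$, one has $\Gamma_e\equiv c_1(\mathscr{O}(1))^{m-g-1}\cap p^*\beta_0+c_1(\mathscr{O}(1))^{m-g}\cap p^*\beta_1$ modulo rational equivalence, with $\beta_1=p_*\Gamma_e=\sum_j A_j\alpha_j$ and $A_j=\sum_k q_k\binom{m}{k}k^{2+j}$. Condition (T1) kills every $A_\ell$ with $\ell\neq i$, leaving $\beta_1=A_i\alpha_i$, which is $\sim_{alg}0$ by hypothesis; in particular $\beta_1$ is numerically trivial, so the computation of $\deg\beta_0$ in Lemma \ref{l1} applies and (T2) forces $\deg\beta_0=0$. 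Since zero-cycles of degree zero on a smooth projective connected variety are algebraically trivial, $\beta_0\sim_{alg}0$, and so $\Gamma_e\sim_{alg}0$ on $S^m C$, whence $\Delta_e=(1/m!)f^*\Gamma_e\sim_{alg}0$ on $C^m$.

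Projecting via $\psi:C^m\to C^n$ (onto the first $n$ coordinates) and pushing to $J(C)$ via $\sigma_n$ preserves algebraic triviality, so $\sigma_{n*}\psi_*\Delta_e\sim_{alg}0$. Grouping subsets $T$ by $\ell=\#(T\cap S_0)$ and $r=\#(T\cap S')$ shows that $\psi_*\Delta_e$ is itself a modified diagonal on $C^n$ with new coefficients $Q_\ell=\sum_{r=0}^{m-n}\binom{m-n}{r}q_{\ell+r}$, so
\[
\sigma_{n*}\psi_*\Delta_e=\sum_{\ell=1}^n Q_\ell\binom{n}{\ell}[\ell]_*[C]=\sum_j B_j\alpha_j,\qquad B_j=\sum_{\ell=1}^n Q_\ell\binom{n}{\ell}\ell^{2+j}.
\]
Applying the Beauville projectors (well-defined modulo algebraic equivalence) gives $B_j\alpha_j\sim_{alg}0$ for every $j$; thus to conclude $\alpha_j\sim_{alg}0$ it suffices to exhibit a solution of (T1), (T2) with $B_j\neq 0$.

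This final linear-algebra step is the main obstacle and is analogous to Lemma \ref{soleq}. Dividing out the common factor $\binom{m}{k}$ turns the functionals of (T1) and (T2) into the monomials $k\mapsto k^s$ for $s\in\{1,2,\ldots,g+1\}\setminus\{2+i\}$, while the Vandermonde-type identity $\sum_\ell\binom{n}{\ell}\binom{m-n}{k-\ell}\ell(\ell-1)\cdots(\ell-t+1)=(n)_t\binom{m-t}{k-t}$ rewrites $B_j$ (up to the same factor) as a polynomial in $k$ of degree $2+j$. Linear independence of $B_j$ from (T1), (T2) reduces to the nonvanishing of the $k^{2+i}$-coefficient of this polynomial, which is an explicit combinatorial expression in $m,n,i,j$ (involving Stirling numbers of both kinds and falling factorials of $m$ and $n$); this can be arranged by taking $m$ and $n$ large, with $n\geq 2+j$, along the lines of section 4.
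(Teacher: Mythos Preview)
Your proposal is correct and follows essentially the same route as the paper: choose the $q_k$ so that, using the hypothesis $\alpha_i\sim_{alg}0$, the argument of Lemma~\ref{l1} upgrades to give $\Delta_e\sim_{alg}0$ on $C^m$; then project to $C^n$, push to $J(C)$, and arrange the resulting Beauville coefficient $B_j$ to be nonzero by a linear-independence argument in the style of Section~\ref{se}.

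Two small remarks. First, the paper simplifies by proving only the inductive step $\alpha_i=0\Rightarrow\alpha_{i+1}=0$, so it only needs $B_{i+1}\neq 0$; your direct attack on an arbitrary $j>i$ is fine but needs the obvious generalization of the combinatorial lemma. Second, the relevant combinatorial input in the paper is Lemma~\ref{lem2} (not Lemma~\ref{soleq}): it encodes the linear functionals as the polynomials $r_s(x)=T^s(1+x)^m$ and the functional $B_{i+1}$ as $(1+x)^{m-n}T^{i+3}(1+x)^n$, and shows linear independence by tracking the highest power of $(1+x)$ and then letting $m\to\infty$ with $n$ fixed. Your rewriting via the Vandermonde/Stirling identity is an alternative packaging of the same computation, ending in the same asymptotic nonvanishing.
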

\begin{proof}It suffices to show that if $\alpha_i=0$ then $\alpha_{i+1}=0$.
Recall that the modified diagonal cycle on $C^m$ was defined as 
\[\Delta_e=\sum_{T\neq\emptyset, T\subset S}q_{\#T}\Delta_T\]
where $S=\{1,2,\ldots,m\}$. Since $\alpha_i=0$, arguing as in lemma \ref{l1}, it can be checked that $\Delta_e$ is algebraically trivial if 
\begin{equation}\label{nc}\sum_{k=1}^mq_k\left(\begin{array}{c}m\\k\end{array}\right)k^j=0\qquad\text{for }j\in\{1,2,\ldots,g-2\}\setminus{i} \end{equation}
Let $\psi:C^m\to C^n$ denote the projection onto the first $n$ factors, then 
\[\psi_*(\Delta_e)=\sum_{l=1}^n\Big(\sum_{j=0}^{m-n}\left(\begin{array}{c}m-n\\j\end{array}\right)q_{l+j}\Big)\cdot\Big(\sum_{\#W=l}\Delta_W\Big)\]
Pushing this cycle forward to the Jacobian, we get the cycle 
\begin{align}\sum_{l=1}^n\Big(\sum_{j=0}^{m-n}&\left(\begin{array}{c}m-n\\j\end{array}\right)q_{l+j}\Big)\left(\begin{array}{c}n\\l\end{array}\right)[l]_*([C])=\nonumber\\
&\sum_{l=1}^n\left(\begin{array}{c}n\\l\end{array}\right)\Big(\sum_{j=0}^{m-n}\left(\begin{array}{c}m-n\\j\end{array}\right)q_{l+j}\Big)\sum_{s=0}^{g-2}l^{2+s}\alpha_s\label{e10}\end{align}
If the above cycle is algebraically equivalent to 0, then for every $k$, one of the following two holds
\begin{enumerate}
	\item[(a)] $\alpha_s\sim_{alg}0$
	\item[(b)] $\sum_{l=1}^n\left(\begin{array}{c}n\\l\end{array}\right)\Big(\sum_{j=0}^{m-n}\left(\begin{array}{c}m-n\\j\end{array}\right)q_{l+j}\Big)l^{2+s}=0$
\end{enumerate}
Putting $s=i+1$ in the above, we may ask if we can find $m$ and rational numbers $q_k$ satisfying the following conditions
\begin{enumerate}
	\item $\sum_{k=1}^mq_k\left(\begin{array}{c}m\\k\end{array}\right)k^j=0\qquad\text{for }j\in\{1,2,\ldots,g-2\}\setminus{i}$
	\item $\sum_{l=1}^n\left(\begin{array}{c}n\\l\end{array}\right)\Big(\sum_{j=0}^{m-n}\left(\begin{array}{c}m-n\\j\end{array}\right)q_{l+j}\Big)l^{i+3}\neq0$
\end{enumerate}
This is possible is proved in lemma \ref{lem2}. 
\end{proof}
\begin{question*}
One may ask if the above proposition holds for higher dimensional subvarieties an abelian variety, that is, if $X\hookrightarrow A$ is a 
subvariety and if $[X]_i$ denote the Beauville components of $[X]$, then does $[X]_i\sim_{alg}0$ for some $i>0$ imply $[X]_j\sim_{alg}0$ 
for all $j>i$.
\end{question*}

\section{Cycles on Abelian varieties}
\begin{lemma}
Let $C$ be a smooth projective curve of genus $g$ and let $J(C)$ denote its Jacobian. Numerical and smash equivalence coincide for 1-cycles on $J(C)$.
\end{lemma}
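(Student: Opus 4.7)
The plan is to bootstrap from Theorem \ref{mt} by covering $J(C)$ with a product of copies of $C$. Fix a base point $c_0 \in C$ and let $\pi \colon C^g \to J(C)$ denote the Abel--Jacobi map sending $(x_1, \ldots, x_g)$ to the class of $\sum_i (x_i - c_0)$. This factors as the quotient $C^g \to S^g C$ (Galois of degree $g!$) followed by the classical birational morphism $S^g C \to J(C)$, so $\pi$ is proper, surjective, and generically finite of degree $g!$.

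Given a numerically trivial 1-cycle $\alpha \in CH_1(J(C))_{\mathbb{Q}} = CH^{g-1}(J(C))_{\mathbb{Q}}$, I would first lift it to $C^g$ via the refined lci pullback $\pi^*\alpha \in CH^{g-1}(C^g)_{\mathbb{Q}} = CH_1(C^g)_{\mathbb{Q}}$, which is defined because $\pi$ is a morphism of smooth varieties. Numerical triviality is preserved under $\pi^*$: for any $\beta \in CH^1(C^g)_{\mathbb{Q}}$ of complementary dimension, the projection formula gives $\deg(\pi^*\alpha \cdot \beta) = \deg(\alpha \cdot \pi_*\beta) = 0$. Theorem \ref{mt} then produces an integer $N$ with $(\pi^*\alpha)^N = 0$ in $CH_*((C^g)^N)_{\mathbb{Q}}$.

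The final step is to descend smash nilpotence along $\pi$. Applying the proper pushforward $(\pi^{\times N})_*$ and using the compatibility of the external product with pushforward yields $(\pi_* \pi^*\alpha)^N = 0$ in $CH_*(J(C)^N)_{\mathbb{Q}}$. Since $\pi$ is generically finite of degree $g!$, we have $\pi_* \pi^* = g!\cdot \mathrm{id}$ on $CH_*(J(C))_{\mathbb{Q}}$, so $g!\,\alpha$, and hence $\alpha$ itself, is smash nilpotent. The whole reduction is formal once Theorem \ref{mt} is granted; I do not anticipate any serious obstacle beyond verifying these standard compatibilities between pullback, pushforward, numerical equivalence, and the external product.
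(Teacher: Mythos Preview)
Your argument is correct and follows the same overall strategy as the paper—cover $J(C)$ by a power of $C$ and invoke Theorem~\ref{mt}—but with a different choice of power. The paper takes $m>2g-2$ so that $p\colon S^mC\to J(C)$ is a genuine projective bundle, and then uses the projective bundle formula to write
\[
\alpha=\frac{1}{m!}\,p_*f_*f^*\bigl(p^*\alpha\cap c_1(\mathscr{O}(1))^{m-g}\bigr),
\]
where $f\colon C^m\to S^mC$ is the quotient; the inner cycle is a numerically trivial $1$-cycle on $C^m$, and Theorem~\ref{mt} finishes. You instead take $m=g$, where $\pi\colon C^g\to J(C)$ is only generically finite, and use $\pi_*\pi^*=g!\cdot\mathrm{id}$ directly. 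Your route is shorter and avoids the projective bundle machinery entirely; the trade-off is that $\pi$ is not flat (the birational map $S^gC\to J(C)$ contracts the locus of special divisors), so you are implicitly using the lci pullback between smooth varieties and the projection formula in that generality (Fulton, Proposition~8.3(c)), rather than the flat version the paper can get away with. Both reductions are formal once Theorem~\ref{mt} is available.
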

\begin{proof}
Using the same notation as in the previous section, we have 
\[C^m\xrightarrow{f} S^mC\xrightarrow{p}J(C)\]
For $m>2g-2$, $p$ is a projective bundle associated to a locally free sheaf and so we have for any cycle $\alpha\in CH^*_{\mbb{Q}}(J(C))$
\[\alpha=\frac{1}{m!}p_*f_*f^*(p^*\alpha\cap c_1(\sc{O}(1))^{m-g})\]
Thus, if $\alpha$ is a numerically trivial 1-cycle, then $f^*(p^*\alpha\cap c_1(\sc{O}(1))^{m-g})$ is a numerically trivial 1-cycle 
on $C^m$ and so by the results of the previous section, this cycle is smash nilpotent and so $\alpha$ is smash nilpotent.
\end{proof}

\begin{theorem}\label{prop2}
Numerical and smash equivalence coincide for 1-cycles on abelian varieties.
\end{theorem}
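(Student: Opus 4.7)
The plan is to reduce the general case to the Jacobian case handled by the preceding lemma. For an arbitrary abelian variety $A$, I would invoke the classical fact that there exists a smooth projective curve $C$ together with a surjective homomorphism $\pi\colon J(C)\twoheadrightarrow A$. One obtains such a $C$, for instance, as a sufficiently general smooth complete intersection of ample divisors on $A$ passing through the identity; the subvariety of $A$ generated by the image of $C$ is then all of $A$, so the resulting map $J(C)\to A$ is surjective.

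By Poincar\'e's complete reducibility theorem, there exists an abelian subvariety $B\subset J(C)$ such that the restriction $\phi:=\pi|_B\colon B\to A$ is an isogeny, of some degree $d$. Writing $i\colon B\hookrightarrow J(C)$ for the inclusion, one has $\phi=\pi\circ i$ and $\phi_*\phi^*=d\cdot\mathrm{id}$ on $CH_*(A)_{\mathbb{Q}}$, so that for any $\alpha\in CH_1(A)_{\mathbb{Q}}$,
\[\alpha=\tfrac{1}{d}\,\pi_*\,i_*\,\phi^*\alpha.\]

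Now suppose $\alpha$ is numerically trivial. Pullback by the finite surjection $\phi$ preserves numerical triviality, so $\phi^*\alpha$ is numerically trivial on $B$. The projection formula gives $\deg(i_*\phi^*\alpha\cdot\eta)=\deg(\phi^*\alpha\cdot i^*\eta)=0$ for every cycle $\eta$ on $J(C)$ of complementary dimension, hence $i_*\phi^*\alpha$ is a numerically trivial 1-cycle on $J(C)$. By the preceding lemma, it is smash nilpotent on $J(C)$. Finally, smash nilpotence passes through proper pushforward: if $\beta^n=0$ in $CH(X^n)_{\mathbb{Q}}$ then $(f_*\beta)^n=(f^{\times n})_*(\beta^n)=0$ in $CH(Y^n)_{\mathbb{Q}}$. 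Applying this to $\pi$ and $\beta=i_*\phi^*\alpha$, we conclude that $d\alpha=\pi_*i_*\phi^*\alpha$ is smash nilpotent on $A$, and therefore so is $\alpha$ since we work with rational coefficients.

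The real content has been absorbed into the lemma for Jacobians; the only things to keep straight here are the classical existence of a surjection from a Jacobian, Poincar\'e reducibility to produce an isogeny section $\phi$, and the easy verifications that numerical triviality and smash nilpotence behave well under the relevant pullback and pushforward. There is no serious obstacle beyond assembling these pieces correctly.
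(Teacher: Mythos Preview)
Your argument is correct and follows the same overall strategy as the paper: reduce to the case of a Jacobian via the classical fact that every abelian variety is, up to isogeny, a factor of some $J(C)$, and then invoke the preceding lemma.

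The organisation differs slightly. The paper takes an abelian variety $B$ with $A\times B$ isogenous to $J(C)$, embeds $A\hookrightarrow A\times B$ via $x\mapsto(x,e_B)$, and transports the cycle through the two isogenies $\phi\colon A\times B\to J(C)$ and $\psi\colon J(C)\to A\times B$; to recognise $\phi^*\psi^*(i_*\alpha)$ as a nonzero multiple of $i_*\alpha$ it passes through the Beauville decomposition and the formula $[l]^*=l^{2p-s}$ on $CH^p_s$. You instead realise $A$ as isogenous to an abelian subvariety $B\subset J(C)$ and use only the elementary identity $\phi_*\phi^*=\deg(\phi)$ for an isogeny, together with the projection formula. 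This is a touch more direct: it handles an arbitrary numerically trivial $1$-cycle in one shot, without first decomposing it into Beauville components. Conversely, the paper's route makes explicit how the argument interacts with the Beauville grading, which is in keeping with the surrounding discussion. Either way the content is the same, and the heavy lifting has indeed already been done in the lemma for Jacobians.
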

\begin{proof}
Let $A$ be an abelian variety. Then we can find another abelian variety $B$ such that $A\times B$ is 
isogenous to the Jacobian of a curve. Let $\phi:A\times B\to J(C)$ and $\psi:J(C)\to A\times B$ be such that $\psi\circ \phi=[l]_{A\times B}$. 
Let $\alpha\in CH^{g_A-1}_s(A)$ 
be a numerically trivial 1-cycle. Consider the inclusion 
\[i:A\to A\times B\]
given by 
\[x\mapsto (x,e_B)\]
Since $p_1\circ i=1_A$, we get $p_{1*}\circ i_*=1_{CH(A)}$ and so $i_*$ is an inclusion. Now consider the commutative diagram
\[\xymatrix{A\ar[r]^i\ar[d]_{[n]} & A\times B\ar[d]^{[n]\times [n]}\\
		A\ar[r]_i& A\times B}
\]
Since \[([n]\times [n])_*i_*\alpha=i_*[n]_*\alpha=n^{2+s}i_*\alpha\]
we get that $i_*\alpha\in CH^{g_B+g_A-1}_s(A\times B)$ and is numerically trivial. Now $\psi^*(i_*\alpha)$ is numerically trivial and thus 
smash nilpotent, and so $\phi^*\psi^*(i_*\alpha)$ is smash 
nilpotent. This is $l^{2(g_B+g_A-1)-s}i_*\alpha$ and so we get $i_*\alpha$ is smash nilpotent. Pushing this forward to $A$, 
we get that $\alpha$ is smash nilpotent.
\end{proof}

\begin{proposition}\label{prop1}Let $A$ be an abelian variety of dimension $g$. 
Let $\theta\in CH^1_{\mbb{Q}}(A)$ be an ample symmetric class, that is, $[-1]^*\theta=\theta$. The multiplication map 
$\times \theta^{g-2p+s}:CH^p_s(A)\to CH^{g-p+s}_s(A)$ is an isomorphism.
\end{proposition}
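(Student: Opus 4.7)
The statement is the refinement to Beauville components of the hard Lefschetz theorem for abelian varieties, which the introduction already cites as \cite[Theorem 5.2]{kunnemann}. My plan is to deduce it from Künnemann's motivic hard Lefschetz: he constructs an $sl_2$-triple $(L,H,\Lambda)$ of correspondences acting on the Chow motive of $A$, where $L$ is multiplication by $\theta$ and $\Lambda$ is built from the Pontryagin product with a class dual to $\theta$; he also shows that $h(A)$ decomposes into finite-dimensional $sl_2$-subrepresentations.

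First I would check the eigenvalue match. Since $\theta$ is symmetric, $[n]^*\theta=n^2\theta$, and for $x\in CH^p_s(A)$ one has $[n]^*x=n^{2p-s}x$, so
\[
[n]^*\bigl(\theta^{g-2p+s}\cdot x\bigr)=n^{2(g-p+s)-s}\bigl(\theta^{g-2p+s}\cdot x\bigr),
\]
confirming $\theta^{g-2p+s}\cdot x\in CH^{g-p+s}_s(A)$; in particular $L:CH^p_s(A)\to CH^{p+1}_s(A)$ preserves the Beauville index $s$.

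Next I would turn on Künnemann's $sl_2$-action. Under his Chow-Künneth decomposition, $CH^p_s(A)$ is contained in the Chow group of the piece $h^{2p-s}(A)$, and by construction $H$ acts on $h^k(A)$ by the scalar $k-g$; hence $H$ acts on $CH^p_s(A)$ by $2p-g-s$. Setting $k:=g-2p+s$, the subspace $CH^p_s(A)$ sits in weight $-k$ while $CH^{g-p+s}_s(A)$ sits in weight $+k$. Applying the standard representation theory of $sl_2$ inside each finite-dimensional piece of Künnemann's Lefschetz decomposition shows that $L^k$ restricts to an isomorphism between these two weight spaces, which is precisely the claim.

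The main obstacle, already resolved in \cite{kunnemann}, is the construction of $\Lambda$ and the verification of the $sl_2$ relations at the level of Chow correspondences in a manner compatible with the Beauville bigrading; this is where the Pontryagin product and the polarization $\phi_\theta$ enter essentially. Granting that input, the proof is a routine application of $sl_2$-representation theory.
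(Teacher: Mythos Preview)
Your proposal is correct and is essentially the same as the paper's: the paper simply cites \cite[Proposition 5.5]{beauvillearxiv}, which is precisely the $sl_2$-hard-Lefschetz argument you sketch, just in Beauville's formulation rather than K\"unnemann's. Both references construct the $sl_2$-triple $(L,H,\Lambda)$ at the level of Chow correspondences and the isomorphism on Beauville components then follows from finite-dimensional $sl_2$-representation theory exactly as you describe.
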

\begin{proof}
\cite[Proposition 5.5]{beauvillearxiv}.
\end{proof}

\begin{proposition}\label{th1}
Cycles in $CH^p_{p-1}(A)$ are smash nilpotent for $p>1$.
\end{proposition}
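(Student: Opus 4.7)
The plan is to reduce the statement to the already-proved case of $1$-cycles (Theorem~\ref{prop2}) by multiplying with a power of an ample symmetric class and using the Beauville hard Lefschetz isomorphism of Proposition~\ref{prop1}. The key preliminary observation is that any $\alpha\in CH^p_s(A)$ with $s\geq 1$ is automatically numerically trivial: pushing through any Weil cohomology, $[n]^{*}$ acts as $n^{2p}$ on $H^{2p}$ while by definition it acts as $n^{2p-s}$ on $CH^p_s(A)$, so $(n^{2p}-n^{2p-s})\,cl(\alpha)=0$ for every $n$, forcing $cl(\alpha)=0$. Since $p>1$ gives $s=p-1\geq 1$, every $\alpha\in CH^p_{p-1}(A)$ is numerically trivial.

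The case $p=g$ is then immediate: $\alpha$ is a $0$-cycle of degree zero, and $0$-cycles of degree zero on any smooth projective variety are algebraically trivial (take $T=X$ and $\beta=\Delta_X$ in the definition), so $\alpha$ is smash nilpotent by Voevodsky--Voisin. Assume henceforth $1<p<g$. Fix an ample symmetric $\theta\in CH^1_0(A)$ and set $\beta:=\alpha\cdot\theta^{g-p-1}$. Multiplicativity of the Beauville decomposition (which holds because $[n]^{*}$ is a ring homomorphism) places $\beta$ in $CH^{g-1}_{p-1}(A)$. This $\beta$ is a numerically trivial $1$-cycle on an abelian variety, hence smash nilpotent by Theorem~\ref{prop2}. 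Meanwhile Proposition~\ref{prop1} says that multiplication by $\theta^{g-p-1}$ is an isomorphism $CH^p_{p-1}(A)\xrightarrow{\sim} CH^{g-1}_{p-1}(A)$.

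The main obstacle is to transfer smash nilpotence from $\beta$ back to $\alpha$. For this I would appeal to a motivic form of the hard Lefschetz theorem for abelian varieties (in the spirit of K\"unnemann): the inverse of the above Beauville isomorphism is induced by an algebraic correspondence $\Gamma\in CH(A\times A)_{\mathbb{Q}}$, expressible in terms of the Lefschetz operator, a Lefschetz adjoint $\Lambda$, and the Beauville projectors, all of which are realized by correspondences on abelian varieties (alternatively one can invoke the Fourier--Mukai description via the Poincar\'e bundle). It then suffices to note that smash nilpotence is preserved under the action of correspondences: if $\gamma^{\boxtimes N}=0$ on $X^N$ and $\Gamma\in CH(X\times Y)$, then
\[
(\Gamma_{*}\gamma)^{\boxtimes N}\;=\;(\Gamma^{\boxtimes N})_{*}(\gamma^{\boxtimes N})\;=\;0,
\]
by compatibility of exterior products with intersection, pullback, and pushforward. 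Applying the inverse correspondence to $\beta$ therefore yields that $\alpha$ is smash nilpotent.
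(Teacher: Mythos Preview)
Your argument is correct, but the paper's proof takes a different and somewhat cleaner route. Instead of intersecting with $\theta^{g-p-1}$ and then inverting via the motivic hard Lefschetz machinery, the paper simply applies the Fourier transform $\mathscr{F}_A$: since $\mathscr{F}_A$ carries $CH^p_{p-1}(A)$ into $CH^{g-1}_{p-1}(\hat A)$, the class $\mathscr{F}_A(\alpha)$ is a numerically trivial $1$-cycle and hence smash nilpotent by Theorem~\ref{prop2}. One then uses the multiplicativity $\mathscr{F}_{A^{\times r}}(\alpha^{\times r})=\mathscr{F}_A(\alpha)^{\times r}$ together with the fact that the Fourier transform is an isomorphism on Chow groups to conclude $\alpha^{\times r}=0$ for suitable $r$. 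Conceptually both proofs do the same thing---pass to a $1$-cycle by an invertible correspondence and pull the smash nilpotence back---but the Fourier transform is a single explicit correspondence (given by the Poincar\'e bundle) which is essentially its own inverse, so one avoids having to assemble the inverse of $\times\theta^{g-p-1}$ out of $\Lambda$, Beauville projectors, and the $sl_2$ relations. Your appeal to K\"unnemann/Beauville for that inverse is legitimate, just heavier; and your separate treatment of the case $p=g$ becomes unnecessary in the Fourier approach, since $\mathscr{F}_A$ already sends $CH^g_{g-1}$ to $1$-cycles.
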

\begin{proof}
Let $\alpha\in CH^p_{p-1}(A)$ be a numerically trivial cycle. Let $\sc{F}_A$ denote the Fourier transform. Let $B$ 
denote the abelian variety $A^{\times r}$. Observe that 
\[\sc{F}_{B}(\alpha^{\times r})=\sc{F}_A(\alpha)^{\times r}\]
Since $\sc{F}_A(\alpha)$ is a numerically trivial 1-cycle, it follows from proposition \ref{prop2} that it is 
smash nilpotent. In particular, using the above equality, for some $r>0$ we get that $\sc{F}_{B}(\alpha^{\times r})=0$ 
and since $\sc{F}$ is an isomorphism, $\alpha^{\times r}=0$ 
\end{proof}

Following \cite{fakhruddin} we denote by $\mbox{Griff}^{\,p}_s(A):=CH^p_s(A)\big/\sim_{alg}$.

\begin{theorem*}\cite[Theorem 4.4]{fakhruddin}
$\mbox{Griff}^{\,\,i}_2(A)$ ($i=3,4$) is infinite dimensional for the generic abelian variety $A$ of dimension 5.
\end{theorem*}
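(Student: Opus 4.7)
The plan is to construct, for the generic principally polarized abelian $5$-fold $A$, countably many Beauville components whose classes span an infinite-dimensional subspace of the relevant Griffiths group, and to detect their non-triviality via infinitesimal invariants of normal functions over a suitable moduli base.

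First I would reduce the case $i=3$ to $i=4$. Proposition \ref{prop2}, applied with $p=3$, $g=5$, $s=2$, gives an isomorphism $\times\theta\colon CH^3_2(A)\xrightarrow{\sim} CH^4_2(A)$ for any ample symmetric class $\theta$ (the exponent $g-2p+s$ equals $1$). Since intersection with a divisor class preserves algebraic equivalence, this descends to an isomorphism $\mbox{Griff}^{\,3}_2(A)\cong \mbox{Griff}^{\,4}_2(A)$, so it suffices to prove infinite-dimensionality for symmetric $s=2$ $1$-cycles modulo algebraic equivalence.

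Next I would produce candidate cycles in a family. Specialize $A$ to the Jacobian $J(C)$ of a generic curve $C$ of genus $5$. In the Beauville decomposition $[C]=\sum_{j=0}^{4}[C]_j$, the symmetric pieces $[C]_0,[C]_2,[C]_4$ occur; $[C]_0$ is a rational multiple of $\theta^{4}/4!$ and hence algebraic, while $[C]_2\in CH^4_2(J(C))$ is the natural non-algebraic candidate. To produce a countable reservoir, spread over a cover $B$ of $\mathcal{M}_5$ equipped with a marked base point, form the relative $s=2$ Beauville component of the universal curve inside the universal Jacobian, and then pull back along the endomorphisms $[n]$ and translate by a countable dense set of sections. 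Each such operation preserves the $s=2$ eigenspace and yields a fresh class in the relative Griffiths group.

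To detect non-triviality I would associate to each family $\mathcal{Z}\to B$ the Green--Voisin infinitesimal invariant $\delta\nu_{\mathcal{Z}}$ of the corresponding normal function, which lives in a Koszul-type cohomology group built from the Hodge bundle of the universal abelian variety; non-vanishing of $\delta\nu_{\mathcal{Z}}$ at the generic point of $B$ implies the cycle on the generic fiber is non-algebraic. The main obstacle is then twofold: first, prove $\delta\nu_{\mathcal{Z}}\neq 0$ for the initial Ceresa-type symmetric component, which after restriction to the Jacobian locus is a Hodge-theoretic non-vanishing statement for the generic genus-$5$ curve and should follow from an infinitesimal variation of Hodge structure calculation; second, leverage the countable reservoir above via a Nori-type spread argument to conclude that no finite-dimensional $\mathbb{Q}$-subspace of $\mbox{Griff}^{\,4}_2(A)$ can simultaneously absorb all these classes on the generic fiber. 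The latter is the delicate step: it requires that the variation of $\delta\nu_{\mathcal{Z}}$ with respect to the discrete parameters (the choices of $[n]$ and of translates) be non-degenerate in the appropriate Koszul cohomology, and this is exactly the place where the specifics of dimension $5$ and eigenvalue $s=2$ genuinely enter.
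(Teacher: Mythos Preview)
This theorem is quoted from \cite[Theorem 4.4]{fakhruddin} and is not proved in the present paper; it appears only as input to the subsequent corollary on smash nilpotence. There is therefore no proof here against which to compare your attempt.

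On its own terms, your outline has two genuine gaps. First, the specialization step runs the wrong way. The Jacobian locus has dimension $12$ inside the $15$-dimensional $\mathcal{A}_5$, and the cycles $[C]_2$ are built from the curve $C$, which does not spread to the generic abelian $5$-fold. Specialization maps on Chow and Griffiths groups go from the generic fibre to special fibres, not conversely, so producing infinitely many independent classes on $J(C)$ tells you nothing about the generic $A\in\mathcal{A}_5$. In fact the non-vanishing of $[C]_2$ in $\mbox{Griff}^{\,g-1}_2(J(C))$ for generic genus $5$ is itself not established: Fakhruddin's own Corollary~4.6 only obtains this for $g\geq 11$, and it is \emph{deduced from} Theorem~4.4 rather than used to prove it. Second, your ``countable reservoir'' collapses. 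On a fixed Beauville eigenspace $CH^{g-1}_2$ the endomorphism $[n]^*$ (or $[n]_*$) acts by a scalar power of $n$, and translation by any point of $A$ is algebraically equivalent to the identity; thus neither operation manufactures a new class in the Griffiths group, and you are left with at most the single class $[C]_2$. Fakhruddin instead works directly over $\mathcal{A}_5$ with a genuinely varying family of curves defined on every principally polarized abelian variety and computes the infinitesimal invariant there.

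A minor point: the hard Lefschetz isomorphism you invoke is Proposition~\ref{prop1} in this paper, not Proposition~\ref{prop2}.
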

\begin{corollary*}\cite[Corollary 4.6]{fakhruddin}
$\mbox{Griff}^{\,\,g-1}_2(J)\neq0$ for the generic Jacobian $J$ of dimension $g\geq11$.
\end{corollary*}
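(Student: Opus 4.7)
The plan is to derive nontriviality of $\mathrm{Griff}^{\,g-1}_2(J)$ for the generic Jacobian of dimension $g \geq 11$ from the nontriviality of $\mathrm{Griff}^{\,i}_2(A)$ for a generic five-dimensional abelian variety $A$ (the theorem of Fakhruddin stated just above). The bridge is to realize such an $A$ as an isogeny factor of $J(C)$ for a specific curve $C$ of genus $g$, push the nontrivial class into $J(C)$ by the inclusion of the factor (which preserves Beauville degree by the same commutative diagram with $[n]$ used in the proof of Theorem~\ref{prop2}), and then pass from that specific $J(C)$ to the generic Jacobian by a countability argument on the moduli space.

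For the base case $g=11$, I would use the Prym construction: an unramified double cover $\pi \colon C \to C_0$ of a smooth projective curve $C_0$ of genus $6$ gives $g(C)=11$ by Riemann--Hurwitz, together with an isogeny $J(C) \sim J(C_0) \times P(\pi)$, where $P=P(\pi)$ has dimension $5$. Since the Prym map $\mathcal{R}_6 \to \mathcal{A}_5$ is dominant (classically generically finite of degree $27$), varying the pair $(C_0,\pi)$ makes $P$ sweep out a dense open subset of $\mathcal{A}_5$, and it may be chosen to lie in the locus where the preceding theorem of Fakhruddin applies, producing a class $\alpha \in CH^4_2(P)$ that is not algebraically trivial. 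Pushing forward along $P \hookrightarrow J(C_0) \times P$ preserves the Beauville grading by the diagram in the proof of Theorem~\ref{prop2}, and pushing forward further along the isogeny to $J(C)$ multiplies by a nonzero rational factor, so the image is a nontrivial element of $\mathrm{Griff}^{\,10}_2(J(C))$ for this particular $C$.

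To conclude nontriviality for the generic Jacobian of genus $11$, and to extend from $g=11$ to arbitrary $g \geq 11$, I would invoke the standard countability argument: the locus in $\mathcal{M}_g$ on which a Torelli-equivariant cycle class vanishes modulo algebraic equivalence is a countable union of proper closed subvarieties (indexed by the components of the relative Hilbert schemes parametrizing algebraic equivalences in families), so a very general curve of genus $g$ avoids this union as soon as some curve of genus $g$ does not lie in it. For $g > 11$ the extra input is a curve $C$ of genus $g$ admitting a finite map $C \to C'$ with $C'$ of the Prym type above; then $J(C')$ is an isogeny factor of $J(C)$ and the Beauville class transports once more by the same pushforward argument.

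The main obstacle is the genericity step: one must verify that the nontriviality established on the specific Jacobian propagates to the generic point of $\mathcal{M}_g$. This is ultimately guaranteed by the infinite-dimensionality assertion in Fakhruddin's theorem, which forces the classes obtained from the Prym pushforward to span an infinite-dimensional family as $(C_0,\pi)$ varies; combined with the countability argument this rules out that the class vanishes on the whole moduli space and yields nontriviality at the very general point of $\mathcal{M}_g$ for each $g \geq 11$.
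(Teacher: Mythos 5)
First, a point of context: the paper does not prove this statement at all --- it is quoted as an external input, with citation, from \cite[Corollary 4.6]{fakhruddin}; the only thing the paper itself proves about these classes is the subsequent corollary that they are smash nilpotent. So your proposal must be judged on its own merits. Your geometric starting point is the right one (and is the bridge Fakhruddin himself uses): \'etale double covers $C\to C_0$ with $g(C_0)=6$, hence $g(C)=11$, the isogeny $J(C)\sim J(C_0)\times P$ with $\dim P=5$, and dominance of the Prym map $\mathcal{R}_6\to \mathcal{A}_5$, so that $P$ may be taken very general in $\mathcal{A}_5$ and \cite[Theorem 4.4]{fakhruddin} applies to it. The pushforward bookkeeping is also essentially fine: $i_*$ preserves the Beauville grading by the commutative square with $[n]$, nontriviality is preserved because $p_*i_*=\mathrm{id}$, and for the isogeny one uses that composing with the pushforward along a complementary isogeny gives $[l]_*$, which acts by $l^{2+s}$ on $CH^*_s$ (an isogeny pushforward is \emph{not} literally ``multiplication by a nonzero rational factor,'' but this is repairable exactly as in the proof of theorem \ref{prop2}). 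This correctly yields $\mbox{Griff}^{\,g-1}_2(J(C))\neq 0$ for curves $C$ \emph{in the Prym locus} of $\mathcal{M}_{11}$, a $15$-dimensional subvariety of the $30$-dimensional $\mathcal{M}_{11}$.

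The genuine gap is the passage from the Prym locus to the generic Jacobian, and it cannot be closed by the countability argument as you state it. That argument is valid only for a \emph{fixed} cycle class defined in a family over (a cover of) the whole moduli space: for such a class the locus of algebraic triviality is a countable union of closed subvarieties, so nonvanishing at one point forces nonvanishing at the very general point. Your class is not of this kind: it is the pushforward of an unspecified class $\alpha$ supplied by \cite[Theorem 4.4]{fakhruddin} used as a black box, and it exists only over the Prym locus. The condition ``$\mbox{Griff}^{\,g-1}_2(J)\neq 0$'' itself has the wrong quantifier structure to define such loci (its vanishing locus is a countable intersection of countable unions of closed sets, not a countable union of proper closed subvarieties), and nontriviality of the Griffiths \emph{group} at a special point simply does not propagate to the generic point, since a cycle on a special fiber need not be the specialization of any cycle from the generic fiber; the appeal to infinite-dimensionality in your last paragraph does not change this. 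The repair is to run the argument with a class defined for \emph{every} curve --- e.g.\ the Beauville component $C_{(2)}\in CH^{g-1}_2(J(C))$ of the curve class, whose image under $J(C)\to P$ is the corresponding component of the Abel--Prym curve --- and to know that this particular class is among those proved nontrivial on the very general abelian $5$-fold; that information is part of Fakhruddin's construction and cannot be extracted from the bare statement of his Theorem 4.4. Separately, your reduction of $g>11$ to $g=11$ via a finite cover $C\to C'$ with $g(C')=11$ fails numerically: by Riemann--Hurwitz a degree-$d\geq 2$ cover has $g(C)\geq 10d+1\geq 21$, so the genera $12\leq g\leq 20$ are unreachable this way, and one needs instead a degeneration (e.g.\ to a compact-type curve $C'\cup C''$, whose Jacobian is $J(C')\times J(C'')$), again applied to a family-defined class.
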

\begin{corollary}
The above cycles are smash nilpotent. 
\end{corollary}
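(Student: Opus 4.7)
The plan is simply to match Fakhruddin's cycles to the two results already available in the paper: Theorem \ref{prop2} (numerical and smash equivalence coincide for 1-cycles on abelian varieties) and Proposition \ref{th1} (cycles in $CH^p_{p-1}(A)$ are smash nilpotent for $p>1$). The only real work is a codimension/dimension bookkeeping.

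First, I would record that in all cases under consideration, the cycles lie in a Beauville component $CH^p_s$ with $s=2\geq 1$, and hence are homologically trivial by Beauville's theorem; in particular they are numerically trivial, so the numerical-triviality hypothesis of both Theorem \ref{prop2} and Proposition \ref{th1} is automatically satisfied.

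Next I would split into the three cases produced by Fakhruddin. (i) The elements of $\mbox{Griff}^{\,4}_2(A)$ on a generic abelian fivefold have codimension $4$ on a variety of dimension $5$, hence are 1-cycles; Theorem \ref{prop2} applies and yields smash nilpotence. (ii) The elements of $\mbox{Griff}^{\,g-1}_2(J)$ on a generic Jacobian of dimension $g\geq 11$ have codimension $g-1$, hence are again 1-cycles; Theorem \ref{prop2} applies verbatim. (iii) For the remaining case, $\mbox{Griff}^{\,3}_2(A)$ on a generic abelian fivefold, the cycles have codimension $p=3$ and Beauville weight $s=2$, so that $s=p-1$; this is exactly the hypothesis of Proposition \ref{th1}, which gives smash nilpotence.

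There is no substantive obstacle: the content has been carried by Theorem \ref{prop2} and Proposition \ref{th1}, and the corollary is a one-line application. The only point one should be a bit careful about is to confirm that the Beauville convention used by Fakhruddin matches the one used here (so that $s=2$ really forces homological triviality and $s=p-1$ really identifies the Proposition \ref{th1} case), which is immediate from the definitions recalled in Section 2.
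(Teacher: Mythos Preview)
Your proposal is correct and matches the paper's proof exactly: the paper simply writes ``Follows from proposition \ref{th1} and theorem \ref{prop2},'' and you have carried out precisely the codimension/weight bookkeeping that justifies invoking these two results for the three families of cycles in question. There is nothing to add.
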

\begin{proof}
Follows from proposition \ref{th1} and theorem \ref{prop2}.
\end{proof}

\vskip 3mm
Let $C$ be a smooth projective curve. Let $A(J(C))$ denote the $\mbb{Q}$ algebra of cycles modulo 
algebraic equivalence on $J(C)$. The tautological subring of $A(J(C))$, see \cite{beauvilletaut}, is defined  
as the $\mbb{Q}$ subspace of $A(J(C))$ generated by the class of the curve (which is well 
defined since we are working modulo algebraic equivalence) under the operations : intersection product, Pontryagin 
product, pullback and pushforward under multiplication by integers. 

\begin{corollary}\label{cor2}
Numerical and smash equivalence coincide on the tautological subring of $A(J(C))$.
\end{corollary}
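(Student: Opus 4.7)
The plan is to decompose a numerically trivial tautological cycle by its Beauville weight and reduce, ultimately, to Theorem~\ref{prop2}. Since the tautological subring $T \subset A(J(C))$ is stable under $[n]_*$, Vandermonde interpolation places each Beauville component $\alpha_i \in CH^{g-1}_i(J(C))$ of $[C]$ in $T$, and $T$ is spanned as a $\mathbb{Q}$-vector space by monomials in $\alpha_0, \ldots, \alpha_{g-1}$ under intersection product $\cdot$ and Pontryagin product $*$. The Beauville grading $T = \bigoplus_{s \geq 0} T_s$ is compatible with numerical equivalence (classes of different Beauville weight pair to zero under the degree pairing), so for numerically trivial $\omega \in T$ each homogeneous piece $\omega_s \in T_s$ is separately numerically trivial, and I reduce to showing each $\omega_s$ is smash nilpotent.

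For Beauville weights $s \geq 1$, the argument is quick. Each $\alpha_i$ with $i \geq 1$ is a numerically trivial $1$-cycle on $J(C)$ (since $1$-cycles of nonzero Beauville weight on an abelian variety are automatically numerically trivial), so by Theorem~\ref{prop2} each such $\alpha_i$ is smash nilpotent. Smash nilpotence is preserved under $\cdot$ and $*$: writing $\beta \cdot \gamma = \Delta^*(\beta \times \gamma)$ and $\beta * \gamma = \mu_*(\beta \times \gamma)$, the vanishing of $\beta^{\boxtimes n}$ on $J(C)^n$ forces $(\beta \cdot \gamma)^{\boxtimes n} = (\Delta^n)^*(\beta^{\boxtimes n} \times \gamma^{\boxtimes n}) = 0$ and analogously for $*$. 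Any monomial of total Beauville weight $s \geq 1$ must involve some $\alpha_i$ with $i \geq 1$, and is therefore smash nilpotent; so $\omega_s$ is smash nilpotent for all $s \geq 1$.

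For Beauville weight $0$, the plan is to show that numerical equivalence coincides with algebraic equivalence on $T_0$, which then yields smash nilpotence by the Voevodsky--Voisin theorem. The basic input is the injection $CH^1_0(J(C))/{\sim_{alg}} = NS(J(C))_{\mathbb{Q}} \hookrightarrow H^2(J(C), \mathbb{Q})$, which forces numerical $=$ algebraic on weight-$0$ divisors. The Fourier transform $\sc{F}$ preserves $T$ and gives isomorphisms $CH^p_0 \cong CH^{g-p}_0$ on weight-$0$ parts; combined with the hard Lefschetz isomorphisms of Proposition~\ref{prop1} on Beauville-graded pieces, it allows one to transport a numerically trivial weight-$0$ tautological class in codimension $p$ to a numerically trivial class in $NS(J(C))_{\mathbb{Q}}$, where it must vanish. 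Poincar\'e's formula in cohomology provides the first instance of this reduction, yielding $\alpha_0 \sim_{alg} \theta^{g-1}/(g-1)!$, and iteration expresses every element of $T_0$ modulo algebraic equivalence as a polynomial in the theta class $\theta$; since $\theta^0, \theta^1, \ldots, \theta^g$ are numerically independent, numerical triviality then forces algebraic triviality. The main obstacle is precisely this Fourier--Lefschetz reduction for weight-$0$ tautological classes in intermediate codimensions $1 < p < g-1$, where one must remain inside $T$ and carefully track that each step preserves numerical triviality until landing in $NS(J(C))_{\mathbb{Q}}$; once this structural identification $T_0 / {\sim_{alg}} = \mathbb{Q}[\theta]/(\theta^{g+1})$ is established, the proof is complete.
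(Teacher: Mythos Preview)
Your overall strategy matches the paper's: decompose the tautological ring by Beauville weight, show that the weight-zero part is exactly $\mathbb{Q}[\theta]$ (hence contains no nonzero numerically trivial class), and show that everything of positive weight is smash nilpotent because it is built from smash-nilpotent $1$-cycles on $J(C)$. Your treatment of the $s\geq 1$ case is correct and is essentially the paper's argument rephrased. The paper observes that the ideal $I=\bigoplus_{s>0}R_s$ is generated under intersection by the classes $N^i(w)=-\sc{F}(C_{(i-1)})$ for $i\geq 2$, each the Fourier transform of a numerically trivial $1$-cycle and hence smash nilpotent by Theorem~\ref{prop2}; you instead argue that any monomial of positive weight in the $\alpha_i$ contains a smash-nilpotent factor, and that $\cdot$ and $*$ preserve smash nilpotence. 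Both routes are fine.

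The gap is in your weight-zero step. You correctly identify the target $T_0/\!\sim_{alg}\,=\mathbb{Q}[\theta]/(\theta^{g+1})$, but your Fourier/Lefschetz sketch does not close: Proposition~\ref{prop1} lets you pass from $CH^p_0$ to $CH^{g-p}_0$ by multiplying by $\theta^{g-2p}$, but to transport a weight-zero tautological class in an intermediate codimension into $NS(J(C))_{\mathbb{Q}}$ you would need to \emph{divide} by powers of $\theta$, and there is no a priori reason the inverse Lefschetz image stays inside the tautological ring (or is even well defined modulo algebraic equivalence). You label this the ``main obstacle'' and leave it open. The paper sidesteps this entirely by citing Beauville's structural result \cite[Corollary~3.4 and the line after Corollary~3.5]{beauvilletaut}: the tautological ring is generated as a $\mathbb{Q}$-algebra under the \emph{intersection product alone} by the classes $N^i(w)$, with $N^1(w)=\theta\in R_0$ and $N^i(w)\in R_{i-1}\subset I$ for $i\geq 2$. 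It is then immediate that $\bigoplus_pR^p_0$ is spanned by the powers of $\theta$, so a numerically trivial class in $R$ lies in $I$ and is smash nilpotent. Replacing your Fourier/Lefschetz paragraph with this citation completes the proof.
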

\begin{proof}
Let $g$ denote the genus of the curve. 
\noindent Let us denote the tautological subring by $R$. Since $R$ is stable under pullback by multiplication by integers, 
it has a decomposition as a sum of its Beauville components. The intersection product and Pontryagin 
product have the property 
\[A^p_s\cdot A^{p'}_{s'}\subset A^{p+p'}_{s+s'}\qquad\qquad A^p_s \ast A^{p'}_{s'}\subset A^{p+p'-g}_{s+s'}\]
We follow Beauville's notation and denote the class of the curve by $[C]=\sum_{i=0}^{g-1}C_{(i)}$ 
where $C_{(i)}\in A^{g-1}_i(J(C))$. Since $R$ is generated by the class of the curve, we get 
\[R=\bigoplus_p R^p=\bigoplus_p\bigoplus_{s\geq0}^pR^p_s=\Big(\bigoplus_pR^p_0\Big)\oplus\Big(\bigoplus_p\bigoplus_{s>0}^pR^p_s\Big)\]
The summand $\oplus_p\oplus_{s>0}^pR^p_s$ is an ideal in $R$, with respect to the intersection product and Pontryagin product, 
which we denote by $I$. 

Let $w^{g-d}:=(1/d!)C^{*d}\in A^{g-d}(J(C))$. The $N^i(w)$ are defined as the Newton polynomials in the classes $w^i$. 
For more details we refer the reader to \cite{beauvilletaut}. We will use the following results from \cite{beauvilletaut}, 
\begin{itemize}
	\item $R$ is generated as a $\mbb{Q}$-subalgebra, under the intersection product, by $N^i(w)$ (see the line 
		after \cite[Corollary 3.5]{beauvilletaut}). 
	\item In \cite[Corollary 3.4]{beauvilletaut} it is shown that $N^i(w)=-\sc{F}(C_{(i-1)})$ for $i\geq1$. In particular, 
		for $i\geq2$ one gets $N^i(w)\in I$.
\end{itemize}

Since $N^1(w)=\theta$, from the above it is clear that $\oplus_pR^p_0$ is the vector space spanned by $\langle\theta^p\rangle$. 
Thus, $0\neq\beta\in R$ is numerically trivial iff $\beta\in I$. Since $I$ is generated by $N^i(w)=-\sc{F}(C_{(i-1)})$ for $i\geq2$ 
we get that $\beta$ is smash nilpotent.
\end{proof}

\section{Solving the equations}\label{se}
Consider the following set of linear homogeneous polynomials in the $q$'s
\begin{enumerate}
	\item\label{s1} $l_1=\sum_{k=1}^{m}\left(\begin{array}{c}m\\k\end{array}\right)kq_{k}$
	\item\label{s2} $l_{2,2i}=\sum_{k=1}^{m}\left(\begin{array}{c}m\\k\end{array}\right)k^{2i}q_{k}$, for every $i$ in the set $\{1,\ldots,t\}$ (think of $t$ as $\lfloor\frac{g+1}{2}\rfloor$)
	\item\label{s3} $l_3=\sum_{k=n}^{m}\left(\begin{array}{c}m-n\\k-n\end{array}\right)q_{k}$
\end{enumerate}

It will suffice for us if we can find $m$'s and $q$'s such that the following systems have solutions
\begin{equation}\label{S1}l_1=0,l_{2,2i}=0,l_3\neq0\end{equation}
Consider the following collection of polynomials in one variable. 
\begin{enumerate}
	\item $r_i(x)=\sum_{k=1}^m\left(\begin{array}{c}m\\k\end{array}\right)k^{i}x^k$
	\item $x^{n}(1+x)^{m-n}=\sum_{k=n}^{m}\left(\begin{array}{c}m-n\\k-n\end{array}\right)x^k$
\end{enumerate}
If $T$ denotes the operator $\big(x\frac{d}{dx}\big)$ then 
\[r_i(x)= T^{i}(1+x)^m\]
Hence, it would suffice for us if we can show that the following polynomials are linearly independent for $2t<m$
\[\bigg{\{}x^n(1+x)^{m-n},\qquad r_1(x),\qquad r_{2i}(x)_{i=1,\ldots,t}\bigg{\}} \]
We claim this is true for $3\leq n\leq m$. 
If not, then $\exists$ a nontrivial linear dependence
\begin{equation}\label{e7}\beta_1T(1+x)^m+\sum_{i=1}^t\beta_{2i}T^{2i}(1+x)^m +ax^n(1+x)^{m-n}=0
\end{equation}

\noindent Define $m[j]:=m(m-1)\cdots (m-j)$. 

\begin{lemma}\label{lem1}
We have the following expression for $1\leq i\leq m$
\[T^i(1+x)^m=\sum_{j=1}^ic^i_jm[j-1]x^j(1+x)^{m-j}\]
where the coefficients $c^i_j$ satisfy the relation $c^{i+1}_j=jc^i_j+c^i_{j-1}$.
\end{lemma}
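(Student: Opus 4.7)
The plan is to prove this by induction on $i$, since both the formula and the given recurrence for $c^i_j$ are indexed by $i$. For the base case $i=1$, one computes directly that $T(1+x)^m = x\cdot m(1+x)^{m-1} = m[0]\,x(1+x)^{m-1}$, which matches the claimed formula with $c^1_1 = 1$ (the single coefficient at this level).

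For the inductive step, assume the formula holds at level $i$ and apply $T = x\frac{d}{dx}$ termwise. The key computation is that for each summand,
\[
x\frac{d}{dx}\bigl[x^j(1+x)^{m-j}\bigr] = j\, x^j(1+x)^{m-j} + (m-j)\, x^{j+1}(1+x)^{m-j-1}.
\]
Thus the $j$-th summand $c^i_j m[j-1] x^j(1+x)^{m-j}$ produces two contributions after applying $T$: one staying at index $j$ with an extra factor of $j$, and one shifting to index $j+1$ with an extra factor of $(m-j)$. The crucial bookkeeping identity is $m[j-1]\cdot(m-j) = m[j]$, which absorbs the shift factor into the next level of the falling-product notation, so the contribution at index $j+1$ becomes $c^i_j m[j]\, x^{j+1}(1+x)^{m-j-1}$.

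Collecting, the coefficient of $x^j(1+x)^{m-j}$ in $T^{i+1}(1+x)^m$ receives $j\,c^i_j m[j-1]$ from the unshifted part of the old $j$-th summand, and $c^i_{j-1}\, m[j-2]\cdot(m-j+1) = c^i_{j-1}\, m[j-1]$ from the shifted part of the old $(j-1)$-th summand. Factoring out $m[j-1]$ gives the new coefficient $c^{i+1}_j = j c^i_j + c^i_{j-1}$, closing the induction. There is no real obstacle here; the entire argument is a careful index shuffle, and the only thing to watch is making sure that the endpoints $j=1$ and $j=i+1$ of the new sum are handled correctly (with the convention $c^i_0 = c^i_{i+1} = 0$, so that no spurious boundary terms appear).
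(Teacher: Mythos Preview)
Your proof is correct and follows essentially the same approach as the paper: induction on $i$ with base case $c^1_1=1$, applying $T=x\frac{d}{dx}$ termwise, using the product rule on $x^j(1+x)^{m-j}$, and collecting to obtain the recurrence $c^{i+1}_j=jc^i_j+c^i_{j-1}$. Your explicit mention of the identity $m[j-1](m-j)=m[j]$ and of the boundary convention $c^i_0=c^i_{i+1}=0$ makes the bookkeeping slightly more transparent than in the paper, but the argument is the same.
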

\begin{proof}
For $i=1$, $c^1_1=1$ and the statement is clear. We prove the statement by induction on $i$. In the following calculation, put $c^i_j=0$ for $i<j<1$. 
If $D$ denotes the operator $\frac{d}{dx}$, then
\begin{align*}
	DT^i(1+x)^m&=\sum_{j=1}^ic^i_jm[j-1]D\big(x^j(1+x)^{m-j}\big)\\
		&=\sum_{j=1}^ic^i_jm[j-1]\bigg(jx^{j-1}(1+x)^{m-j}+(m-j)x^j(1+x)^{m-j-1}\bigg)\\
	xDT^i(1+x)^m&=\sum_{j=1}^ic^i_jjm[j-1]x^j(1+x)^{m-j}+\sum_{j=1}^ic^i_jm[j]x^{j+1}(1+x)^{m-j-1}\\
	T^{i+1}(1+x)^m&=\sum_{j=1}^{i+1}\bigg(jc^i_j+c^i_{j-1}\bigg)m[j-1]x^j(1+x)^{m-j}
\end{align*}
\end{proof}
\begin{remark}
It is clear that the coefficients $c^i_j$ are independent of $m$.
\end{remark}

From the above lemma it follows that, for $i\leq m$, the highest power of $(1+x)$ which divides $T^i(1+x)^m$ is $m-i$. Looking at the least 
power of $(1+x)$ which divides all the terms in equation \eqref{e7}, we get that $\beta_i=0$ for $m-2i<m-n$, that is, 
for $i>\frac{n}{2}$. Thus, we may rewrite the relation as 
\begin{align}\label{e8}\beta_1T(1+x)^m&+\sum_{i=1}^{\lfloor\frac{n}{2}\rfloor}\beta_{2i}T^{2i}(1+x)^m =-ax^n(1+x)^{m-n}
\end{align}
This shows that \begin{equation}\label{e9}x^{n}\Bigg\vert\beta_1T(1+x)^m+\sum_{i=1}^{\lfloor\frac{n}{2}\rfloor}\beta_{2i}T^{2i}(1+x)^m\end{equation}

Next we will show that equation \eqref{e9} leads to a contradiction.

\begin{lemma}\label{soleq}
The following polynomials are linearly independent for $m>n\geq 3$, $2t<m$,
\[\bigg{\{}x^n(1+x)^{m-n},\qquad r_1(x),\qquad r_{2i}(x)_{i=1,\ldots,t}\bigg{\}} \]
\end{lemma}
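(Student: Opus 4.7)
The plan is to complete the reduction already begun in the excerpt: I will show that the divisibility \eqref{e9} forces every $\beta_i$ to vanish, and then \eqref{e8} will immediately give $a=0$, contradicting the nontriviality of the assumed linear dependence \eqref{e7}.

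The first step is to read off the coefficients of \eqref{e9} in the monomial basis. Since $T^i(1+x)^m = r_i(x) = \sum_k \binom{m}{k} k^i x^k$, the combination appearing in \eqref{e9} has $x^k$-coefficient equal to $\binom{m}{k}\bigl(\beta_1 k + \sum_{i=1}^{\lfloor n/2 \rfloor} \beta_{2i} k^{2i}\bigr)$. The divisibility by $x^n$ therefore says, after cancelling the binomial factors (nonzero for $1 \leq k \leq m$), that the polynomial
\[
P(y) \;:=\; \beta_1 y \;+\; \sum_{i=1}^{\lfloor n/2\rfloor} \beta_{2i}\, y^{2i}
\]
vanishes at $y = 1, 2, \ldots, n-1$.

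The second step is to show this forces $P \equiv 0$. Factor $P(y) = y\, Q(y)$, where $Q$ is a polynomial of degree at most $2\lfloor n/2\rfloor - 1$ whose nonzero terms by construction live only in the constant position and in the odd-degree positions; in particular the coefficient of $y^{2j}$ in $Q$ vanishes for every $j\geq 1$. This $Q$ has $n-1$ roots $1,2,\ldots,n-1$. When $n$ is odd, $\deg Q \leq n-2$ and root counting gives $Q \equiv 0$. When $n$ is even, $\deg Q \leq n-1$ so $Q = c\prod_{k=1}^{n-1}(y-k)$; but then the coefficient of $y^2$ in $Q$ equals (up to a sign) $c\cdot e_{n-3}(1,2,\ldots,n-1)$, and for $n\geq 4$ the elementary symmetric polynomial of positive integers is strictly positive. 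Since $Q$ has no $y^2$ term, $c=0$ and again $Q\equiv 0$.

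I expect the even-$n$ case to be the only subtle point: one needs to exploit the very specific ``constant plus odd-degree terms only'' shape of $Q$ to block the generic form $c\prod(y-k)$ that pure root counting would otherwise allow. Once $P \equiv 0$, all $\beta_i$ vanish, \eqref{e8} collapses to $a\, x^n(1+x)^{m-n} = 0$, so $a = 0$, and the putative linear dependence \eqref{e7} is trivial. This contradiction proves the lemma.
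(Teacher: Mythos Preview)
Your argument is correct and follows the same route as the paper: both extract from \eqref{e9} the relations $\beta_1 k + \sum_{i} \beta_{2i} k^{2i}=0$ for $k=1,\ldots,n-1$ and conclude that all $\beta_i$ vanish, whence $a=0$. The only difference is in justifying this last implication: the paper simply notes that $n-1\geq\lfloor n/2\rfloor+1$ conditions on $\lfloor n/2\rfloor+1$ unknowns suffice (implicitly a generalized Vandermonde/Descartes' rule-of-signs fact), while you give a more hands-on degree count plus a parity/structure argument for even $n$---both are valid and the overall strategy is identical.
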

\begin{proof}
Equating to 0 the coefficient of $x^k$, for $k\in\{1,2,\ldots,n-1\}$, in 
\[\beta_1T(1+x)^m+\sum_{i=1}^{\lfloor\frac{n}{2}\rfloor}\beta_{2i}T^{2i}(1+x)^m=\beta_1r_1(x)+\sum_{i=1}^{\lfloor\frac{n}{2}\rfloor}\beta_{2i}r_{2i}(x)\]
we get 
\begin{align}\label{e13}\beta_1k+\sum_{i=1}^{\lfloor\frac{n}{2}\rfloor}\beta_{2i}k^{2i}=0\qquad\qquad k\in\{1,2,\ldots,n-1\}
\end{align}
It is clear that all $\beta_i=0$ if $n-1\geq\lfloor\frac{n}{2}\rfloor+1$, that is, for $n\geq3$.
\end{proof}

\begin{lemma}\label{lem2}
For $1\leq i<n-3$, $\exists m\gg0$ such that the polynomial $(1+x)^{m-n}T^{i+3}(1+x)^n$ is not a rational linear combination 
of $r_{j}(x)$ for $j\in\{1,\ldots,n\}\setminus i$
\end{lemma}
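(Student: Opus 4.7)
The plan is to extract equations on the rational numbers $\beta_j$ in a putative relation
\begin{equation*}
(1+x)^{m-n}\,T^{i+3}(1+x)^n \;=\; \sum_{j\in\{1,\ldots,n\}\setminus\{i\}} \beta_j\, T^j(1+x)^m
\end{equation*}
by expanding both sides in the linearly independent family $\{x^k(1+x)^{m-k}\}_{k\geq 1}$ via Lemma \ref{lem1}, and then to observe that these equations become inconsistent as $m\to\infty$.

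Using $m[k-1]=k!\binom{m}{k}$, Lemma \ref{lem1} yields
\begin{equation*}
T^j(1+x)^m \;=\; \sum_{k=1}^{j} c^j_k\,k!\binom{m}{k}\,x^k(1+x)^{m-k},\qquad (1+x)^{m-n}T^{i+3}(1+x)^n \;=\; \sum_{k=1}^{i+3} c^{i+3}_k\,k!\binom{n}{k}\,x^k(1+x)^{m-k}.
\end{equation*}
Matching coefficients of $x^k(1+x)^{m-k}$ for $k=i+4,\ldots,n$, where the left-hand side contributes $0$, yields a triangular system in $\beta_{i+4},\ldots,\beta_n$ with unit diagonal (since $c^j_j=1$), forcing $\beta_j=0$ for every $j\geq i+4$. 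The equations at $k=i+3,\,i+2,\,i+1$ then determine $\beta_{i+3},\beta_{i+2},\beta_{i+1}$ successively in closed form; for instance $\beta_{i+3}=\binom{n}{i+3}/\binom{m}{i+3}$, and analogous expressions give $\beta_{i+s}=O(m^{-(i+s)})$ as $m\to\infty$ for $s\in\{1,2,3\}$.

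The decisive constraint comes from the equation at $k=i$. Since the index set is $\{1,\ldots,n\}\setminus\{i\}$, this equation contains no $\beta_i$, and after the reductions above it reduces to
\begin{equation*}
c^{i+3}_i\,\binom{n}{i} \;=\; \binom{m}{i}\bigl[\beta_{i+1}c^{i+1}_i+\beta_{i+2}c^{i+2}_i+\beta_{i+3}c^{i+3}_i\bigr].
\end{equation*}
The left-hand side is a fixed nonzero rational number: $c^{i+3}_i>0$ by a straightforward induction on the recursion $c^{i+1}_j=jc^i_j+c^i_{j-1}$ with $c^1_1=1$ (these $c^i_j$ are the Stirling numbers of the second kind), and $\binom{n}{i}\neq 0$ because $1\leq i<n-3<n$. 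The right-hand side, on the other hand, is $\binom{m}{i}\cdot O(m^{-(i+1)})=O(m^{-1})$ and therefore tends to $0$ as $m\to\infty$. Hence the equation fails for every sufficiently large $m$, proving the lemma.

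The main obstacle I anticipate is simply bookkeeping the asymptotic estimate at $k=i$ cleanly; the essential nonvanishing input, $c^{i+3}_i>0$, comes for free from the recursion in Lemma \ref{lem1}, and everything else is routine manipulation of binomial coefficients.
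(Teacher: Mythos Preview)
Your proof is correct and follows essentially the same route as the paper: expand both sides via Lemma~\ref{lem1} in the basis $\{x^k(1+x)^{m-k}\}$, kill $\beta_j$ for $j>i+3$ by matching the low-$(1+x)$-power terms, solve the next three equations for $\beta_{i+3},\beta_{i+2},\beta_{i+1}$, and then observe that the $k=i$ equation pits the fixed positive number $c^{i+3}_i$ (times $\binom{n}{i}$) against a quantity that is $O(m^{-1})$. The only differences are cosmetic: you use $k!\binom{m}{k}$ in place of the paper's falling factorials $m[k-1]$ and phrase the endgame as an asymptotic $O(m^{-1})$ estimate rather than writing out the explicit identity in the notation $m[s,r]$, and you note (correctly) that the $c^i_j$ are Stirling numbers of the second kind.
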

\begin{proof}
Assume that 
\[(1+x)^{m-n}T^{i+3}(1+x)^n=\sum_{j\in\{1,\ldots,n\}\setminus i}\beta_jT^j(1+x)^m\]
Using lemma \ref{lem1} and comparing the highest power of $(1+x)$ dividing all terms in the above equation, we get $\beta_j=0$ for $j>i+3$. Thus,
\[(1+x)^{m-n}T^{i+3}(1+x)^n=\sum_{j\in\{1,\ldots,i+3\}\setminus i}\beta_jT^j(1+x)^m\]
Again using lemma \ref{lem1} and comparing the highest powers of $(1+x)$ on both sides of the above equality, we get, for $j\in\{i+3,i+2,i+1,i\}$, the following equations
\begin{align}
	c^{i+3}_{i+3}n[i+2]&=\beta_{i+3}c^{i+3}_{i+3}m[i+2]\nonumber\\
	c^{i+3}_{i+2}n[i+1]&=\beta_{i+2}c^{i+2}_{i+2}m[i+1]+\beta_{i+3}c^{i+3}_{i+2}m[i+1]\nonumber\\
	c^{i+3}_{i+1}n[i]&=\beta_{i+1}c^{i+1}_{i+1}m[i]+\beta_{i+2}c^{i+2}_{i+1}m[i]+\beta_{i+3}c^{i+3}_{i+1}m[i]\nonumber\\
	c^{i+3}_{i}n[i-1]&=\beta_{i+1}c^{i+1}_{i}m[i-1]+\beta_{i+2}c^{i+2}_{i}m[i-1]+\beta_{i+3}c^{i+3}_{i}m[i-1]\label{e21}
\end{align}
Solving for $\beta_{i+3}, \beta_{i+2}, \beta_{i+1}$ using the first three of the above equations we get 
\begin{align*}
	\beta_{i+3}&=\frac{n[i+2]}{m[i+2]}\qquad (\because c^i_i=1\qquad\forall i\geq 1)\\
	\beta_{i+2}&=c^{i+3}_{i+2}\Big(\frac{n[i+1]}{m[i+1]}-\frac{n[i+2]}{m[i+2]}\Big)\\
	\beta_{i+1}&=c^{i+3}_{i+1}\Big(\frac{n[i]}{m[i]}-\frac{n[i+2]}{m[i+2]}\Big)-c^{i+2}_{i+1}c^{i+3}_{i+2}\Big(\frac{n[i+1]}{m[i+1]}-\frac{n[i+2]}{m[i+2]}\Big)
\end{align*}
Substituting the above values into \eqref{e21}, we get 
\begin{align*}
c^{i+3}_i\frac{n[i-1]}{m[i-1]}=c^{i+1}_ic^{i+3}_{i+1}\Big(\frac{n[i]}{m[i]}-&\frac{n[i+2]}{m[i+2]}\Big)-c^{i+1}_ic^{i+2}_{i+1}c^{i+3}_{i+2}\Big(\frac{n[i+1]}{m[i+1]}-\frac{n[i+2]}{m[i+2]}\Big)\\
		&+c^{i+2}_ic^{i+3}_{i+2}\Big(\frac{n[i+1]}{m[i+1]}-\frac{n[i+2]}{m[i+2]}\Big)+c^{i+3}_i\frac{n[i+2]}{m[i+2]}
\end{align*}
For $r<s$ denote by $m[s,r]:=\frac{m[s]}{m[r]}$. With this notation the above equation is 
\begin{align*}
c^{i+3}_i=c^{i+1}_ic^{i+3}_{i+1}&\Big(\frac{n[i,i-1]}{m[i,i-1]}-\frac{n[i+2,i-1]}{m[i+2,i-1]}\Big)\\
	&-c^{i+1}_ic^{i+2}_{i+1}c^{i+3}_{i+2}\Big(\frac{n[i+1,i-1]}{m[i+1,i-1]}-\frac{n[i+2,i-1]}{m[i+2,i-1]}\Big)\\
		&+c^{i+2}_ic^{i+3}_{i+2}\Big(\frac{n[i+1,i-1]}{m[i+1,i-1]}-\frac{n[i+2,i-1]}{m[i+2,i-1]}\Big)+c^{i+3}_i\frac{n[i+2,i-1]}{m[i+2,i-1]}
\end{align*}
If we keep $i,n$ fixed and let $m\to \infty$, then the RHS is 0, while the LHS is a fixed positive integer, which is a contradiction.
\end{proof}


\begin{thebibliography}{Kun93}

\bibitem[Bea]{beauvillearxiv}
A.~Beauville, \emph{The action of $sl_2$ on abelian varieties arxiv:0805.1541}.

\bibitem[Bea86]{beauvillechow}
A.~Beauville, \emph{Sur l'anneau de chow d'une variété abélienne. (french) [the
  chow ring of an abelian variety]}, Math. Ann. \textbf{273} (1986), 647651.

\bibitem[Bea04]{beauvilletaut}
A.~Beauville, \emph{Algebraic cycles on jacobian varieties}, Compositio Math.
  \textbf{140} (2004), 683--688.

\bibitem[Fak96]{fakhruddin}
N.~Fakhruddin, \emph{Algebraic cycles on generic abelian varieties}, Compositio
  Math. \textbf{100} (1996), 101--119.

\bibitem[Ful97]{fulton}
W.~Fulton, \emph{Intersection theory}, Springer, 1997.

\bibitem[GS95]{schoen}
B.H. Gross and C.~Schoen, \emph{The modified diagonal cycle on the triple
  product of a pointed curve.}, Ann. Inst. Fourier (Grenoble) \textbf{45}
  (1995), 649--679.

\bibitem[Kim05]{kimura}
S.~I. Kimura, \emph{Chow groups are finite dimensional, in some sense}, Math.
  Ann. \textbf{331} (2005), 173--201.

\bibitem[Kle94]{kleiman}
S.~Kleiman, \emph{The standard conjectures}, Motives (Seattle, WA, 1991), Proc.
  Sympos. Pure Math., 55, Part 1, Amer. Math. Soc., Providence, RI (1994),
  3--20.

\bibitem[KS09]{ks}
B.~Kahn and R.~Sebastian, \emph{Smash-nilpotent cycles on abelian $3$-folds},
  Math. Res. Lett. \textbf{16} (2009), 1007--1010.

\bibitem[Kun93]{kunnemann}
K.~Kunnemann, \emph{A lefschetz decomposition for chow motives of abelian
  schemes}, Invent.\ Math. \textbf{113} (1993), 85--102.

\bibitem[She74]{shermenev}
A.~M. Shermenev, \emph{The motive of an abelian variety}, Funct. Anal. Appl.
  \textbf{8} (1974), 47--53.

\bibitem[Voe95]{voe}
V.~Voevodsky, \emph{A nilpotence theorem for cycles algebraically equivalent to
  zero.}, Internat. Math. Res. Notices \textbf{4} (1995), 187--198.

\bibitem[Voi94]{voisin}
C.~Voisin, \emph{Remarks on zero-cycles of self-products of varieties. moduli
  of vector bundles (sanda, 1994; kyoto, 1994)}, Lecture Notes in Pure and
  Appl. Math. \textbf{179} (1994), 265--285.

\end{thebibliography}
\end{document}